\documentclass[12pt,reqno]{amsart}
\usepackage{faktor}

\usepackage[english]{babel}
\usepackage[utf8]{inputenc}

\usepackage{faktor}
\usepackage{ragged2e}
\usepackage{enumitem}
\usepackage{graphicx}
\usepackage{subfig}
\usepackage{tikz-cd}
\usetikzlibrary{arrows.meta}

\usepackage[T1]{fontenc} 
\usepackage{amsmath,amsthm,amsfonts,amssymb,dsfont,mathtools} 
\usepackage{multirow,multicol}
\usepackage{tabularx} 
\usepackage{setspace} 
\usepackage{verbatim}

\usepackage{color}
\usepackage[dvipsnames]{xcolor}

\usepackage[pdftex,%
bookmarks=true,
breaklinks=true,
bookmarksnumbered = true,
colorlinks= true,
urlcolor= green,
anchorcolor = yellow,
citecolor=blue,
]{hyperref}                   

\newtheoremstyle{theoremm}{}{}{\itshape}{}{\scshape}{.}{ }{}
\theoremstyle{theoremm}
\newtheorem{teo}{Theorem}[section]
\newtheorem{lema}[teo]{Lemma}
\newtheorem{prop}[teo]{Proposition}
\newtheorem{coro}[teo]{Corollary}

\newtheoremstyle{remark}{}{}{}{}{\scshape}{.}{ }{}
\theoremstyle{remark}
\newtheorem{defi}[teo]{Definition}

\renewcommand{\ker}[1]{\ensuremath{\operatorname{\text{Ker}}\left({#1}\right)}}

\renewcommand{\to}{\ensuremath{\longrightarrow}}
\renewcommand{\mapsto}{\ensuremath{\longmapsto}}

\makeatletter
\def\@map#1#2[#3]{\mbox{$#1 \colon\thinspace #2 \to #3$}}
\def\map#1#2{\@ifnextchar [{\@map{#1}{#2}}{\@map{#1}{#2}[#2]}}
\makeatother

\numberwithin{equation}{section}
\allowdisplaybreaks

\usepackage{fancyhdr}
\everymath{\displaystyle}

\begin{document}

\title{A crystallographic quotient of the framed braid group and associated flat manifolds}

\author[{\^E}nio Leite]{{\^E}nio Leite}
\address{{\^E}nio Leite\\ Secretaria da Educa\c c\~ao do Estado da Bahia, 5ª Avenida, nº 550, no Centro Administrativo da Bahia (CAB),  CEP 41745-004 - Salvador - BA - Brazil.}
\email{leiteenio@gmail.com}

\author[Oscar Ocampo]{Oscar Ocampo}
\address{Oscar Ocampo\\ Universidade Federal da Bahia, Departamento de Matem\'atica - IME, CEP:~40170-110 - Salvador, Brazil}
\email{oscaro@ufba.br}

\author[P.~C.~C.~Santos J\'unior]{Paulo Cesar Cerqueira dos Santos J\'unior}
\address{Paulo Cesar Cerqueira dos Santos J\'unior\\ Universidade Estadual do Sudoeste da Bahia, Departamento de Ci\^encias Exatas e Tecnol\'ogicas, Estrada do Bem Querer, Km 04, CEP:~45083-900 - Vit\'oria da Conquista - BA - Brazil}
\email{paulo.santos@uesb.edu.br}

\author[A.~C.~M.~Teixeira]{Ana Carolina Moura Teixeira} 
\address{Ana Carolina Teixeira\\ Universidade Federal da Bahia, Departamento de Matem\'atica - IME, CEP:~40170-110 - Salvador, Brazil}\email{ana.teixeira@ifba.edu.br}

\subjclass[2020]{Primary 20F36; Secondary 20H15, 20E45, 20E22}

\keywords{Framed braid groups, crystallographic groups, Bieberbach groups, flat manifolds}
\date{\today}

\begin{abstract}
We study the quotient of the framed braid group $FB_n$ by the commutator subgroup of the framed pure braid group $FP_n$. We prove that
$$
\faktor{FB_n}{[FP_n,FP_n]}\cong\mathbb Z^n\rtimes\faktor{B_n}{[P_n,P_n]},
$$
and, for $n\geq3$, that this quotient is a crystallographic group of dimension $n(n+1)/2$ with holonomy group $S_n$. We investigate its finite-order elements and conjugacy classes, prove that it has no elements of finite even order, and construct Bieberbach subgroups with prescribed finite $2$-group holonomy. Starting from suitable Bieberbach subgroups of $\faktor{B_n}{[P_n,P_n]}$, we then construct Bieberbach subgroups of the framed quotient whose associated compact flat manifolds are torus bundles over the corresponding unframed flat manifolds. We describe these bundles through the generalized Calabi construction and determine their orientability, first Betti numbers, and the centers of their fundamental groups. As an application, we study a family with cyclic holonomy of odd order, obtaining a criterion for the existence of flat K\"ahler structures, and indicate how the same construction applies to cyclic holonomy of $2$-power order.
\end{abstract}

\maketitle

\section{Introduction}\label{sec:intro}

Braid groups constitute an important class of groups arising naturally in topology, knot theory, configuration spaces, and geometric group theory. The Artin braid group $B_n$ is naturally identified with the fundamental group of the unordered configuration space of $n$ distinct points in the plane, while the pure braid group $P_n$ corresponds to the ordered configuration space. This relationship between braid groups and configuration spaces goes back to the work of Fadell and Neuwirth \cite{FaN}, who established, among other fundamental results, the exact sequences associated with fundamental groups of configuration spaces. Since then, braid groups have played a central role in several areas of topology and group theory.

A natural extension of the Artin braid group is obtained by allowing each strand to carry an integer framing. The resulting group is the \emph{framed braid group} $FB_n$. Algebraically, it may be described as the semidirect product
$$
FB_n=\mathbb Z^n\rtimes B_n,
$$
where $B_n$ acts on $\mathbb Z^n$ by permuting the framing indices. Framed braid groups were introduced in the context of $3$-manifold theory by Ko and Smolinsky \cite{KS}, who used them in the formulation of an analogue of the Markov theorem for representations of $3$-manifolds. Thus, framed braid groups enrich classical braid theory by incorporating both crossing and framing data.

A natural source of crystallographic groups in braid theory comes from quotients associated with (pure) braid groups. Gonçalves, Guaschi, and Ocampo \cite{GGO1} proved that, for $n\geq3$, the quotient $\faktor{B_n}{[P_n,P_n]}$ is a crystallographic group with holonomy group $S_n$, and studied its torsion, finite subgroups, and conjugacy classes. This connection was subsequently extended to generalized braid groups associated with complex reflection groups \cite{Marin}, braid groups of orientable closed surfaces of positive genus \cite{GGOP}, and braid groups of punctured surfaces \cite{Renato}. Related crystallographic quotients were also obtained for virtual braid groups, virtual twin groups, and other braid-like groups in \cite{CO}. 

Recall that a crystallographic group of dimension $d$ is a discrete and cocompact group of isometries of $\mathbb R^d$. By the Bieberbach theorems, such a group contains a maximal normal free abelian subgroup of finite index, isomorphic to $\mathbb Z^d$, called its translation subgroup; the corresponding finite quotient is its holonomy group. A torsion-free crystallographic group is called a Bieberbach group, and Bieberbach groups are precisely the fundamental groups of compact flat Riemannian manifolds. We refer to \cite{Charlap,Szczepanski} for background on crystallographic groups and flat manifolds.

The first purpose of this paper is to study the analogue of the quotient $\faktor{B_n}{[P_n,P_n]}$ for framed braid groups. Let $FP_n$ denote the framed pure braid group. We prove that
$$
\faktor{FB_n}{[FP_n,FP_n]}\cong\mathbb Z^n\rtimes\faktor{B_n}{[P_n,P_n]},
$$
where the action on $\mathbb Z^n$ is induced by the natural permutation action on the framing indices. Moreover,
$$
\faktor{FP_n}{[FP_n,FP_n]}\cong\mathbb Z^n\times\faktor{P_n}{[P_n,P_n]}\cong\mathbb Z^{n(n+1)/2}.
$$
The action of $S_n$ on this free abelian group is the direct sum of the natural permutation action on the framing generators and the permutation action on the basis indexed by the two-element subsets of $\{1,\ldots,n\}$. Since this representation is faithful, for $n\geq3$ we obtain a short exact sequence
$$
1\longrightarrow\mathbb Z^{n(n+1)/2}\longrightarrow\faktor{FB_n}{[FP_n,FP_n]}\longrightarrow S_n\longrightarrow1,
$$
and hence $\faktor{FB_n}{[FP_n,FP_n]}$ is a crystallographic group of dimension $n(n+1)/2$ with holonomy group $S_n$.

We then investigate torsion and conjugacy in this crystallographic quotient. We prove that it has no elements of finite even order and that every element of odd order in $S_n$ admits a lift of the same order to $\faktor{FB_n}{[FP_n,FP_n]}$. In particular, the finite orders occurring in the framed quotient are precisely the odd orders occurring in $S_n$. We also apply the general conjugacy methods developed in \cite{CO} to obtain normal forms and conjugacy criteria for elements of the framed quotient. Finally, if $H\leq S_n$ is a $2$-subgroup, we prove that its preimage in $\faktor{FB_n}{[FP_n,FP_n]}$ is a Bieberbach group of dimension $n(n+1)/2$ with holonomy group $H$. Consequently, by Cayley's theorem, every finite $2$-group occurs as the holonomy group of a compact flat manifold whose fundamental group is isomorphic to a subgroup of the framed quotient.

We then use this structure to study flat manifolds obtained from Bieberbach subgroups of the classical braid quotient. Given a Bieberbach subgroup $\Gamma\leq\faktor{B_n}{[P_n,P_n]}$ of dimension $d$ with holonomy group $H\leq S_n$ and holonomy representation $\Theta$, its framed extension $\widetilde{\Gamma}=\mathbb Z^n\rtimes\Gamma$ is a Bieberbach subgroup of $\faktor{FB_n}{[FP_n,FP_n]}$ of dimension $n+d$, with the same holonomy group and holonomy representation $\rho\oplus\Theta$, where $\rho$ is the natural permutation representation. We prove that the associated flat manifold $M_{\widetilde{\Gamma}}$ is a geometric $T^n$-bundle over $M_\Gamma$ with totally geodesic fibers and an affine section, and we describe it via the generalized Calabi construction. If the action of $H$ on $\{1,\ldots,n\}$ has $s$ orbits, say $O_1,\ldots,O_s$, then the center of $\widetilde{\Gamma}$ is isomorphic to $Z(\Gamma)\times\mathbb Z^s$, while the first Betti number satisfies $b_1(M_{\widetilde{\Gamma}})=b_1(M_\Gamma)+s$. We also obtain an explicit orientability criterion in terms of the holonomy representation.

As a concrete application, we study the family of Bieberbach groups with cyclic holonomy of odd order constructed in~\cite{Ocampo}. If $q=p_1^{r_1}\cdots p_t^{r_t}$ and $n=p_1^{r_1}+\cdots+p_t^{r_t}$, the corresponding framed flat manifold is orientable, and both its first Betti number and the rank of the center of its fundamental group are equal to $(n+t^2)/2$. We also prove that it admits a flat K\"ahler structure if and only if $n+t^2\equiv0\pmod4$; in particular, for an odd prime power $q=p^r$, this is equivalent to $q\equiv3\pmod4$.

The same framed construction applies to the Bieberbach groups with cyclic holonomy of $2$-power order studied in~\cite{OR}. We record the resulting torus-bundle construction and determine the corresponding changes in the first Betti number and the center.

The paper is organized as follows. In Section~\ref{crys}, we recall the basic facts about crystallographic and Bieberbach groups used throughout the paper. We study the quotient $\faktor{FB_n}{[FP_n,FP_n]}$ in Section~\ref{fracry}; we first establish its semidirect product decomposition, then prove that it is crystallographic, investigate its torsion and conjugacy, and construct Bieberbach subgroups with prescribed finite $2$-group holonomy. In Section~\ref{sec:flat}, we construct framed Bieberbach groups from Bieberbach subgroups of $\faktor{B_n}{[P_n,P_n]}$, describe the associated flat manifolds as torus bundles and via the generalized Calabi construction, determine orientability, the center, and the first Betti number, study in detail the family with cyclic holonomy of odd order, and conclude by indicating how the same construction applies to the cyclic $2$-power families of \cite{OR}.

\subsection*{Acknowledgments}

E.~L.~was partially supported by Funda\c c\~ao de Amparo \`a Pesquisa do Estado da Bahia (FAPESB). O.~O.~was partially supported by the National Council for Scientific and Technological Development (CNPq, Brazil) through a \textit{Bolsa de Produtividade} grant No.~305422/2022--7.

\section{Crystallographic and Bieberbach groups}\label{crys}

In this section, we recall the basic facts about crystallographic and Bieberbach groups used throughout the paper, including an algebraic characterization of crystallographic groups in terms of extensions with finitely generated free abelian kernel and finite quotient, as well as some standard facts concerning Bieberbach groups and compact flat Riemannian manifolds. For further details on these topics, we refer the reader to \cite[Section I.1.1]{Charlap}, \cite[Section 2.1]{Dekimpe}, and \cite[Chapter 3]{Wolf}. Throughout this work, we identify $\operatorname{Aut}(\mathbb{Z}^n)$ with $\operatorname{GL}(n,\mathbb{Z})$.

A discrete and cocompact subgroup $\Pi$ of $\mathbb{R}^{n}\rtimes O(n,\mathbb{R})\subseteq\operatorname{Aff}(\mathbb{R}^{n})$ is called a crystallographic group of dimension $n$. If, in addition, $\Pi$ is torsion-free, then $\Pi$ is called a Bieberbach group of dimension $n$. By the Bieberbach theorems, every crystallographic group $\Pi$ of dimension $n$ contains a maximal normal free abelian subgroup of finite index, called its translation subgroup, which is isomorphic to $\mathbb{Z}^n$. The corresponding finite quotient is called the holonomy group of $\Pi$. Moreover, Bieberbach groups are precisely the fundamental groups of compact flat Riemannian manifolds.

Recall that an integral representation of rank $n$ of a group $\Phi$ is a homomorphism $\Theta\colon\Phi\longrightarrow\operatorname{Aut}(\mathbb{Z}^n)$. Two such representations are said to be equivalent if they are conjugate in $\operatorname{Aut}(\mathbb{Z}^n)$, and $\Theta$ is said to be faithful if it is injective. We shall use the following algebraic characterization of crystallographic groups; see \cite[Lemma 8]{GGO1} and \cite[Theorem 2.1.4]{Dekimpe}.

\begin{prop}\label{caraccrys}
A group $\Pi$ is crystallographic of dimension $n$ if and only if there exists a short exact sequence $1\longrightarrow\mathbb{Z}^{n}\longrightarrow\Pi\stackrel{\zeta}{\longrightarrow}\Phi\longrightarrow1$ such that $\Phi$ is finite and the integral representation $\Theta\colon\Phi\longrightarrow\operatorname{Aut}(\mathbb{Z}^n)$, induced by conjugation on $\mathbb{Z}^n$, is faithful. More precisely, for $\varphi\in\Phi$, choose $\pi\in\Pi$ such that $\zeta(\pi)=\varphi$ and define $\Theta(\varphi)(x)=\pi x\pi^{-1}$, for $x\in\mathbb{Z}^n$.
\end{prop}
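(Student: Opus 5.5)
The plan is to prove the two directions separately, relying on the Bieberbach theorems as recalled above together with standard facts about cohomology of finite groups.

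\emph{Necessity.} Let $\Pi\subseteq\mathbb{R}^n\rtimes O(n,\mathbb{R})$ be crystallographic. By the first Bieberbach theorem, the translation subgroup $L=\Pi\cap\mathbb{R}^n$ is a lattice $\cong\mathbb{Z}^n$ and has finite index; this gives the sequence with $\Phi=\Pi/L$, which is isomorphic to the image of $\Pi$ under the linear-part map $\Pi\to O(n,\mathbb{R})$. Conjugating a translation $t_x$ by $(a,A)$ yields $t_{Ax}$. Hence $\Theta(\varphi)$ is the restriction of the linear part $A$ to $L$. Since $L$ spans $\mathbb{R}^n$, $A$ is determined by its action on $L$. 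So if $\Theta(\varphi)=\mathrm{id}$, then $A=I$, which means $\pi$ is a translation lying in $L$, and therefore $\varphi=1$. Thus $\Theta$ is faithful.

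\emph{Sufficiency.} Suppose we have the extension with $\Phi$ finite and $\Theta$ faithful.
\begin{enumerate}[label=(\roman*)]
\item Since $\Phi$ is finite, average a standard inner product over $\Theta(\Phi)$. This gives an inner product on $\mathbb{R}^n=\mathbb{Z}^n\otimes\mathbb{R}$ preserved by $\Phi$. After a change of basis, $\Theta(\Phi)\subseteq O(n,\mathbb{R})$.
\item The extension class lies in $H^2(\Phi;\mathbb{Z}^n)$. Its image in $H^2(\Phi;\mathbb{R}^n)$ vanishes, because this group is killed by $|\Phi|$ and is also a real vector space, hence is $0$. Concretely, write the extension with a normalized $2$-cocycle $c\colon\Phi\times\Phi\to\mathbb{Z}^n$, so that $\Pi=\mathbb{Z}^n\times\Phi$ with twisted multiplication. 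Then set $f(\varphi)=\frac{1}{|\Phi|}\sum_{\psi}c(\varphi,\psi)$. One checks that $c=\delta f$ in real coefficients.
\item Define $\iota(x,\varphi)=(x+f(\varphi),\Theta(\varphi))\in\mathbb{R}^n\rtimes O(n,\mathbb{R})$. The identity $c=\delta f$ is exactly the statement that $\iota$ is a homomorphism.
\item Injectivity of $\iota$: if $\iota(x,\varphi)=1$, then $\Theta(\varphi)=I$. Faithfulness forces $\varphi=1$, and then $x=0$. This step is the \textbf{only} place faithfulness is used.
\item The image is discrete and cocompact. It contains the lattice $\mathbb{Z}^n$ of translations with finite index, and a finite union of translates of a discrete cocompact set is again discrete and cocompact.
\end{enumerate}

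The main obstacle is the cocycle computation in (ii)--(iii), namely verifying $\delta f=c$ with the correct signs and action convention. Since the text only asks to use the cited references (\cite[Lemma 8]{GGO1}, \cite[Theorem 2.1.4]{Dekimpe}), in the paper it suffices to invoke them. The sketch above records why they hold.
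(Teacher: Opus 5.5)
Your proposal is correct and consistent with the paper, which gives no argument of its own and simply invokes \cite[Lemma 8]{GGO1} and \cite[Theorem 2.1.4]{Dekimpe}. Your sketch is the standard proof behind those citations: the first Bieberbach theorem gives necessity, and averaging an inner product together with the vanishing of $H^2(\Phi;\mathbb{R}^n)$ gives sufficiency. The step you flag as the main obstacle goes through with your signs: with $(x,\varphi)(y,\psi)=(x+\Theta(\varphi)y+c(\varphi,\psi),\varphi\psi)$, summing the associativity identity over the third variable gives exactly $c(\varphi,\psi)=f(\varphi)+\Theta(\varphi)f(\psi)-f(\varphi\psi)$, which is the homomorphism condition for $\iota$, and normalization gives $f(1)=0$ for the injectivity step.
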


Notice that the representation $\Theta$ is well defined. Indeed, if $\pi'$ is another lift of $\varphi$, then $\pi'=\pi a$ for some $a\in\mathbb{Z}^n$, and since $\mathbb{Z}^n$ is abelian, we have
$$
\pi'x\pi'^{-1}=\pi axa^{-1}\pi^{-1}=\pi x\pi^{-1}.
$$
In the situation of Proposition~\ref{caraccrys}, $\Phi$ is the holonomy group of $\Pi$, and $\Theta$ is called the (integral) holonomy representation of $\Pi$. Its extension of scalars $\Theta_{\mathbb R}\colon\Phi\longrightarrow GL(n,\mathbb R)$ is called the real holonomy representation. 

We shall also use the elementary fact that if $K$ is a characteristic subgroup of $N$ and $N$ is a normal subgroup of a group $G$, then $K$ is normal in $G$. In particular, for every normal subgroup $N\trianglelefteq G$, its commutator subgroup $[N,N]$ is normal in $G$.

The following corollary shows that subgroups of the holonomy group can be realized as holonomy groups of crystallographic subgroups of the same dimension.

\begin{coro}[{\cite[Corollary 10]{GGO1}}]\label{holocor}
Let $\Pi$ be a crystallographic group of dimension $n$ and holonomy group $\Phi$, and let $1\longrightarrow\mathbb{Z}^{n}\longrightarrow\Pi\stackrel{\zeta}{\longrightarrow}\Phi\longrightarrow 1$ be its associated short exact sequence. If $H\leq\Phi$, then $\zeta^{-1}(H)$ is a crystallographic subgroup of $\Pi$ of dimension $n$ with holonomy group $H$.
\end{coro}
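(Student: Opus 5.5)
We will verify the criterion of Proposition~\ref{caraccrys} directly for the subgroup $\Pi_H:=\zeta^{-1}(H)$. The plan is to restrict the given extension to $H$ and check that the restricted data satisfy the hypotheses of Proposition~\ref{caraccrys}.

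First, since $\zeta$ is surjective and $\ker{\zeta}=\mathbb{Z}^n$, the group $\Pi_H$ contains $\mathbb{Z}^n$. Moreover, $\mathbb{Z}^n$ is normal in $\Pi_H$, because it is normal in $\Pi$. The restriction $\zeta|_{\Pi_H}\colon\Pi_H\to H$ is surjective: every $h\in H\subseteq\Phi$ has a preimage in $\Pi$, and that preimage lies in $\Pi_H$ by definition. Its kernel is $\ker{\zeta}\cap\Pi_H=\mathbb{Z}^n$. This gives a short exact sequence
$$1\longrightarrow\mathbb{Z}^n\longrightarrow\Pi_H\stackrel{\zeta|}{\longrightarrow}H\longrightarrow1,$$
and $H$ is finite because $\Phi$ is finite.

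Next, we identify the integral representation induced by conjugation in this new sequence. For $h\in H$, choose a lift $\pi\in\Pi_H$. This $\pi$ is also a lift of $h$ in $\Pi$, so by the well-definedness remark following Proposition~\ref{caraccrys}, the representation $\Theta_H$ of $H$ is exactly the restriction $\Theta|_H$ of the holonomy representation $\Theta$ of $\Pi$. Since $\Pi$ is crystallographic, Proposition~\ref{caraccrys} tells us that $\Theta$ is faithful. The restriction of an injective homomorphism is injective, so $\Theta_H$ is faithful.

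Applying Proposition~\ref{caraccrys} again, now to the sequence for $\Pi_H$, shows that $\Pi_H$ is crystallographic of dimension $n$ with holonomy group $H$. The only point requiring care is the final identification of $H$ as the holonomy group. This means checking that $\mathbb{Z}^n$ is indeed the maximal normal free abelian subgroup of finite index in $\Pi_H$, so that the quotient $H$ really is the holonomy group. Faithfulness of $\Theta_H$ gives this: any abelian normal subgroup $A\supseteq\mathbb{Z}^n$ would have image in $H$ acting trivially on $\mathbb{Z}^n$, since $A$ is abelian. Faithfulness then forces that image to be trivial, so $A=\mathbb{Z}^n$.
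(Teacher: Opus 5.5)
Your proposal is correct and is essentially the standard argument behind \cite[Corollary~10]{GGO1}, which the paper cites without reproducing: restrict the extension to $\zeta^{-1}(H)$, note that the conjugation representation of $H$ is the restriction of the faithful representation $\Theta$, and apply Proposition~\ref{caraccrys}. Your closing maximality check is harmless but not needed, since under the paper's convention following Proposition~\ref{caraccrys} the finite quotient in that sequence is, by definition, the holonomy group.
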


We shall also use some terminology and results concerning geometric fibrations of crystallographic groups. Following Ratcliffe and Tschantz~\cite{RT1}, we use the term \emph{space group} for a crystallographic group. We write an isometry of $\mathbb R^m$ as $a+A$, with $a\in\mathbb R^m$ and $A\in O(m)$, acting by $x\mapsto a+Ax$. For a subgroup $N$ of a space group $\Pi$, set
$$
\operatorname{Span}_{\mathbb R}(N)=\operatorname{span}_{\mathbb R}\{a\in\mathbb R^m\mid a+I\in N\}.
$$
If $N$ is normal in $\Pi$ and $V=\operatorname{Span}_{\mathbb R}(N)$, then $N$ is called \emph{complete} if
$$
N=\{a+A\in\Pi\mid a\in V\text{ and }V^\perp\subseteq\operatorname{Fix}(A)\}.
$$
Here $V^\perp$ denotes the orthogonal complement of $V$ in $\mathbb R^m$, and $\operatorname{Fix}(A)=\{x\in\mathbb R^m\mid A(x)=x\}$. 

If $\faktor{\Pi}{N}$ is torsion-free, then $N$ is complete and the quotient map determines a geometric fiber bundle 
$$
V/N \longrightarrow \mathbb R^m/\Pi \longrightarrow (\mathbb R^m/V)/(\Pi/N)
$$
with totally geodesic fibers~\cite[Theorem~13]{RT1}. Here, a geometric fiber bundle means a fiber bundle which is locally isometrically equivalent to the projection of a product $F\times B$ onto the second factor, where $F$ is the fiber and $B$ is an open metric ball in the base. 

Moreover, if the extension 
$$
1 \longrightarrow N \longrightarrow \Pi \longrightarrow \faktor{\Pi}{N} \longrightarrow 1
$$ 
splits, then this fibration admits an affine section~\cite[Lemma~6]{RT1}. Here an affine section is a section of the bundle projection that lifts, with respect to the natural quotient maps, to an affine map between the corresponding Euclidean spaces.

\section{Framed braid groups and crystallographic groups}\label{fracry}

Let $B_n$ be the braid group on $n$ strands, generated by $\sigma_1,\sigma_2,\ldots,\sigma_{n-1}$ and subject to the relations
\begin{eqnarray}
\sigma_i\sigma_j&=&\sigma_j\sigma_i \qquad\mbox{for } |i-j|>1,\label{dis}\\
\sigma_i\sigma_j\sigma_i&=&\sigma_j\sigma_i\sigma_j \qquad\mbox{for } |i-j|=1,\label{pro}
\end{eqnarray}
where $i,j=1,\ldots,n-1$. We consider the natural action of $B_n$ on the set $\{1,2,\ldots,n\}$ induced by the canonical homomorphism $\pi\colon B_n\longrightarrow S_n$, where $\pi(\sigma_i)=(i,i+1)$. Throughout, we adopt the convention that permutations act on the right. Thus, if $\rho,\tau\in S_n$, then $(\rho\tau)(i)=\tau(\rho(i))$.

\begin{defi}[\cite{KS}]
The framed braid group $FB_n$ is the group generated by $\sigma_1,\ldots,\sigma_{n-1}$ and $t_1,\ldots,t_n$, subject to the relations~(\ref{dis}),~(\ref{pro}) and
$$
\begin{aligned}
t_it_j&=t_jt_i,\\
\sigma_kt_i&=t_{\sigma_k(i)}\sigma_k,
\end{aligned}
$$
where $i,j=1,\ldots,n$ and $k=1,\ldots,n-1$.
\end{defi}

Equivalently, the framed braid group can be described as the semidirect product
$$
FB_n=\mathbb{Z}^n\rtimes B_n,
$$
where $B_n$ acts on $\mathbb{Z}^n=\langle t_1,\ldots,t_n\rangle$ by permuting the generators according to the canonical homomorphism $\pi\colon B_n\longrightarrow S_n$. In particular, every element of $FB_n$ can be written uniquely in the form $t_1^{r_1}\cdots t_n^{r_n}\alpha$, where $r_1,\ldots,r_n\in\mathbb{Z}$ and $\alpha\in B_n$. The integers $r_1,\ldots,r_n$ are called the framings of the corresponding framed braid. With our convention for the action of $S_n$, conjugation is given by
$$
\alpha t_i\alpha^{-1}=t_{\pi(\alpha)^{-1}(i)}.
$$

Recall that the pure braid group is $P_n=\ker{\pi}$. It is generated by the Artin pure braids $A_{i,j}=\sigma_{j-1}\sigma_{j-2}\cdots\sigma_{i+1}\sigma_i^2\sigma_{i+1}^{-1}\cdots\sigma_{j-2}^{-1}\sigma_{j-1}^{-1}$, where $1\leq i<j\leq n$. As in \cite{GGO1}, whenever necessary we use the convention $A_{j,i}=A_{i,j}$. The abelianization of $P_n$ is $\faktor{P_n}{[P_n,P_n]}\cong\mathbb{Z}^{n(n-1)/2}$, with basis given by the images of the elements $A_{i,j}$, $1\leq i<j\leq n$.

The canonical homomorphism $\pi\colon B_n\longrightarrow S_n$ extends to an epimorphism
$$
\psi_0\colon FB_n\longrightarrow S_n
$$
by setting $\psi_0(t_i)=1$ for every $i=1,\ldots,n$. Its kernel is the framed pure braid group $FP_n$. Since $[FP_n,FP_n]\leq FP_n=\ker{\psi_0}$, the homomorphism $\psi_0$ induces an epimorphism
$$
\psi\colon\faktor{FB_n}{[FP_n,FP_n]}\longrightarrow S_n.
$$ 
Since $P_n$ acts trivially on $\mathbb{Z}^n$, we have
$$
FP_n=\mathbb{Z}^n\times P_n.
$$
Thus every element of $FP_n$ can be written uniquely as $t_1^{r_1}\cdots t_n^{r_n}\beta$, where $r_1,\ldots,r_n\in\mathbb{Z}$ and $\beta\in P_n$. The direct product decomposition also yields a presentation of $FP_n$, which we record next.

\begin{prop}\label{prop:FP-presentation}
The framed pure braid group $FP_n$ admits a presentation with generators $t_{\ell}$, for $1\leq \ell\leq n$, $A_{i,j}$, for $1\leq i<j\leq n$ and relations: 
\begin{itemize}
    \item  $A_{i,j}A_{r,s}=A_{r,s}A_{i,j}$ if  $r<s<i<j$  or $i<r<s<j$,
    \item $A_{r,s}^{-1}A_{i,j}A_{r,s}=A_{r,j}A_{i,j}A_{r,j}^{-1}$ if $r<s=i<j$,
    \item $A_{r,s}^{-1}A_{i,j}A_{r,s}=A_{r,j}A_{s,j}A_{i,j}A_{s,j}^{-1}A_{r,j}^{-1}$ if $i=r<s<j$,
    \item $A_{r,s}^{-1}A_{i,j}A_{r,s}=A_{r,j}A_{s,j}A_{r,j}^{-1}A_{s,j}^{-1}A_{i,j}A_{s,j}A_{r,j}A_{s,j}^{-1}A_{r,j}^{-1}$ if $r<i<s<j$,
    \item $t_{\ell}t_k=t_kt_{\ell}$ for all $1\leq \ell,k\leq n$,
    \item $t_{\ell}A_{i,j}=A_{i,j}t_{\ell}$ for all $1\leq \ell\leq n$, $1\leq i<j\leq n$.
\end{itemize}
\end{prop}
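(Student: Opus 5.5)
The plan is to derive the presentation from the direct product decomposition $FP_n=\mathbb{Z}^n\times P_n$ together with the standard presentation of a direct product. The first step is to justify the decomposition itself. Inside $FB_n=\mathbb{Z}^n\rtimes B_n$ the kernel of $\psi_0$ consists of the elements $t_1^{r_1}\cdots t_n^{r_n}\alpha$ with $\pi(\alpha)=1$, that is, with $\alpha\in P_n$. This uses the uniqueness of normal forms and the fact that $\psi_0$ is trivial on the $t_i$. Hence $FP_n=\mathbb{Z}^n\rtimes P_n$. The conjugation formula $\alpha t_i\alpha^{-1}=t_{\pi(\alpha)^{-1}(i)}$ shows that $P_n$ acts trivially on $\mathbb{Z}^n$, so the semidirect product is in fact direct.

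Next, recall the general fact about presentations of direct products. If $G=\langle X\mid R\rangle$ and $H=\langle Y\mid S\rangle$, then
$$G\times H=\langle X\cup Y\mid R\cup S\cup\{[x,y]:x\in X,\ y\in Y\}\rangle.$$
The proof is short. The presented group maps onto $G\times H$. Conversely, the commutation relations let every word be rewritten as a word in $X$ followed by a word in $Y$. A word lying in the kernel then has both parts trivial in $G$ and in $H$ respectively, hence it is a consequence of $R\cup S$.

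To apply this, take $G=\mathbb{Z}^n=\langle t_1,\ldots,t_n\mid t_\ell t_k=t_kt_\ell\rangle$. Take $H=P_n$ with Artin's classical presentation on the generators $A_{i,j}$, $1\le i<j\le n$, whose relations are exactly the first four families listed in the statement, quoted from Artin or from a standard reference such as Hansen or Kassel--Turaev. The commutation relations $t_\ell A_{i,j}=A_{i,j}t_\ell$ are the cross relations. Together these give precisely the stated presentation.

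The only step needing care is the citation of Artin's presentation of $P_n$. Its conjugation relations must match the conventions used here, namely the definition of $A_{i,j}$ and right actions. I would check that the cited source uses the same definition $A_{i,j}=\sigma_{j-1}\cdots\sigma_{i+1}\sigma_i^2\sigma_{i+1}^{-1}\cdots\sigma_{j-1}^{-1}$, and adjust inverses if necessary. Everything else is formal.
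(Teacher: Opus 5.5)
Your proposal is correct and follows the paper's proof. The paper also combines $FP_n=\mathbb{Z}^n\times P_n$ with Artin's presentation of $P_n$, adjoining the commuting generators $t_\ell$ and the relations $[t_\ell,A_{i,j}]=1$. You additionally spell out the two standard facts the paper takes for granted: why $FP_n$ splits as this direct product, and the general presentation of a direct product.
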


\begin{proof}
Since $FP_n=\mathbb Z^n\times P_n$, the result follows from the standard presentation of the pure braid group $P_n$ in terms of the generators $A_{i,j}$; see \cite{Artin}. Indeed, one only needs to adjoin the commuting generators $t_1,\ldots,t_n$ of $\mathbb Z^n$ together with the relations $[t_{\ell},A_{i,j}]=1$.
\end{proof}

The same direct product decomposition immediately yields the following consequence.

\begin{prop}\label{comFP}
Let $n\geq2$. Then $[FP_n,FP_n]=[P_n,P_n]$. 
\end{prop}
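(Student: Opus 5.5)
We use the direct product decomposition $FP_n=\mathbb Z^n\times P_n$ recorded above, and the general fact that the commutator subgroup of a direct product $G\times K$ is $[G,G]\times[K,K]$. Since $\mathbb Z^n$ is abelian, its commutator subgroup is trivial. This gives $[FP_n,FP_n]=\{1\}\times[P_n,P_n]$, which is $[P_n,P_n]$ viewed as a subgroup of $FB_n$.

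The key steps are as follows. First, the inclusion $[P_n,P_n]\subseteq[FP_n,FP_n]$ is immediate, since $P_n\leq FP_n$.

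For the reverse inclusion, it suffices to handle a single commutator $[x,y]$ with $x,y\in FP_n$. Indeed, $[P_n,P_n]$ is a subgroup, so it is closed under products and inverses of such commutators.

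Write $x=a\beta$ and $y=b\gamma$ with $a,b\in\mathbb Z^n=\langle t_1,\ldots,t_n\rangle$ and $\beta,\gamma\in P_n$. By Proposition~\ref{prop:FP-presentation}, each $t_\ell$ commutes with every $A_{i,j}$, so $a,b$ are central in $FP_n$. Expanding then gives
$$
[x,y]=a\beta b\gamma\beta^{-1}a^{-1}\gamma^{-1}b^{-1}=ab a^{-1}b^{-1}\,\beta\gamma\beta^{-1}\gamma^{-1}=[\beta,\gamma],
$$
using also that $a$ and $b$ commute with each other. Hence $[x,y]\in[P_n,P_n]$.

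There is no real obstacle. The only point needing care is the centrality of $\mathbb Z^n$ inside $FP_n$. This holds because pure braids induce the trivial permutation, so the conjugation formula $\alpha t_i\alpha^{-1}=t_{\pi(\alpha)^{-1}(i)}$ gives $\beta t_i\beta^{-1}=t_i$ for every $\beta\in P_n$. The hypothesis $n\geq2$ only ensures that $P_n$ is nontrivial; the argument itself is uniform in $n$.
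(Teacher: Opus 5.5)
Your proof is correct and follows the same route as the paper, which uses the decomposition $FP_n=\mathbb Z^n\times P_n$ with $\mathbb Z^n$ abelian to conclude $[FP_n,FP_n]=[P_n,P_n]$. The only difference is that you verify this directly on individual commutators $[a\beta,b\gamma]=[\beta,\gamma]$, using that $\mathbb Z^n$ is central in $FP_n$, whereas the paper states the equality in one line.
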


\begin{proof}
Since $FP_n=\mathbb{Z}^n\times P_n$ and $\mathbb{Z}^n$ is abelian, we have
$$
[FP_n,FP_n]=[\mathbb{Z}^n\times P_n,\mathbb{Z}^n\times P_n]=[P_n,P_n].
$$
\end{proof}

We shall also use the following elementary fact about quotients of semidirect products.

\begin{lema}\label{deco}
Let $G=H\rtimes K$, and let $N\trianglelefteq G$ be such that $N\leq K$ and $N$ acts trivially on $H$. Then the action of $K$ on $H$ induces an action of $\faktor{K}{N}$ on $H$, and $\faktor{G}{N}\cong H\rtimes(\faktor{K}{N})$. 
\end{lema}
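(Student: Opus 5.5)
We prove Lemma~\ref{deco} directly: we build the natural homomorphism from $G$ onto the proposed semidirect product and show that its kernel is exactly $N$. Write elements of $G=H\rtimes K$ uniquely as pairs $(h,k)$, with multiplication $(h,k)(h',k')=(h\,\phi_k(h'),kk')$, where $\phi\colon K\to\operatorname{Aut}(H)$ is the action.

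\textbf{The induced action.} Since $N$ acts trivially on $H$, we have $N\leq\ker\phi$. Hence $\phi$ factors through a homomorphism $\bar\phi\colon\faktor{K}{N}\to\operatorname{Aut}(H)$ given by $\bar\phi_{kN}=\phi_k$. This is well defined because $\phi_{kn}=\phi_k\phi_n=\phi_k$ for $n\in N$. We also note that $N\trianglelefteq K$, since $N\trianglelefteq G$ and $N\leq K$. Thus $\faktor{K}{N}$ is a group, and the semidirect product $H\rtimes_{\bar\phi}(\faktor{K}{N})$ makes sense.

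\textbf{The homomorphism.} Define
$$
F\colon G\to H\rtimes(\faktor{K}{N}),\qquad F(h,k)=(h,kN).
$$
To see that $F$ is a homomorphism, compute
$$
F\bigl((h,k)(h',k')\bigr)=(h\,\phi_k(h'),kk'N)=(h\,\bar\phi_{kN}(h'),kN\cdot k'N)=F(h,k)F(h',k').
$$
Surjectivity is clear, since every pair $(h,kN)$ is the image of $(h,k)$.

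\textbf{The kernel.} We have $F(h,k)=(1,N)$ exactly when $h=1$ and $k\in N$. Hence $\ker F=\{(1,n)\mid n\in N\}$, which is precisely $N$ viewed as a subgroup of $G$ through the inclusion $K\hookrightarrow G$. The first isomorphism theorem then gives $\faktor{G}{N}\cong H\rtimes(\faktor{K}{N})$.

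There is no real obstacle here. The only points requiring care are checking that the induced action is well defined and that $N$ is normal in $K$, and both follow immediately from the hypotheses. In the intended application, we take $G=FB_n=\mathbb Z^n\rtimes B_n$ and $N=[FP_n,FP_n]=[P_n,P_n]$, using Proposition~\ref{comFP}. This $N$ lies in $B_n$ and is contained in $P_n$, which acts trivially on $\mathbb Z^n$.
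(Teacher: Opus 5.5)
Your proof is correct and follows the same route as the paper. Both define the map $(h,k)\mapsto(h,kN)$ onto $H\rtimes_{\bar\phi}(K/N)$, note that it is surjective with kernel $\{(1,n)\mid n\in N\}=N$, and conclude by the first isomorphism theorem. You are slightly more explicit than the paper, since you check the homomorphism property and the normality $N\trianglelefteq K$ directly.
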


\begin{proof}
Let $\varphi\colon \faktor{K}{N}\longrightarrow\operatorname{Aut}(H)$ be defined by $\varphi(kN)(h)=khk^{-1}$. Since $N$ acts trivially on $H$, the map $\varphi$ is well defined. Consider the homomorphism $\Phi\colon H\rtimes K\longrightarrow H\rtimes_\varphi(\faktor{K}{N})$ defined by $\Phi(h,k)=(h,kN)$. It is surjective and
$$
\ker{\Phi}=\{(1,n)\mid n\in N\}=N.
$$
Hence the first isomorphism theorem gives $\faktor{G}{N}\cong H\rtimes_\varphi(\faktor{K}{N})$. 
\end{proof}

Proposition~\ref{comFP} and Lemma~\ref{deco} yield a useful decomposition of the quotient of the framed braid group by the commutator subgroup of the framed pure braid group.

\begin{teo}\label{teodeco}
Let $n\geq2$. Then
$$
\faktor{FB_n}{[FP_n,FP_n]}\cong\mathbb{Z}^n\rtimes\faktor{B_n}{[P_n,P_n]},
$$
where the action of $\faktor{B_n}{[P_n,P_n]}$ on $\mathbb{Z}^n$ is induced by $\overline{\pi}\colon \faktor{B_n}{[P_n,P_n]}\longrightarrow S_n$.
\end{teo}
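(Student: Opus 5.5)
The plan is to apply Lemma~\ref{deco} with $G=FB_n=\mathbb{Z}^n\rtimes B_n$, $H=\mathbb{Z}^n=\langle t_1,\ldots,t_n\rangle$, $K=B_n$ and $N=[P_n,P_n]$, after replacing $[FP_n,FP_n]$ by $[P_n,P_n]$ using Proposition~\ref{comFP}. So the whole argument reduces to checking the three hypotheses of Lemma~\ref{deco} and then identifying the induced action.

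First, $N=[P_n,P_n]\leq P_n\leq B_n=K$, so $N\leq K$ is immediate. Second, $N$ acts trivially on $H$. Indeed, $B_n$ acts on $\mathbb{Z}^n$ through $\pi$, via $\alpha t_i\alpha^{-1}=t_{\pi(\alpha)^{-1}(i)}$. Since $P_n=\ker{\pi}$, every element of $P_n$, and a fortiori of $[P_n,P_n]$, fixes each $t_i$.

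Third, I must show that $N$ is normal in $G=FB_n$, not merely in $B_n$. This is the only point needing care, though it is mild. I would use the elementary fact recalled in Section~\ref{crys}. Since $FP_n=\ker{\psi_0}$ is normal in $FB_n$ and the commutator subgroup is characteristic, $[FP_n,FP_n]$ is normal in $FB_n$. By Proposition~\ref{comFP} this subgroup equals $[P_n,P_n]$. A direct alternative also works. Conjugation by $t_i$ fixes $P_n$ pointwise, because $t_i$ commutes with $P_n$ in $FP_n=\mathbb{Z}^n\times P_n$. Conjugation by elements of $B_n$ preserves $[P_n,P_n]$, since $P_n\trianglelefteq B_n$. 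Since $FB_n$ is generated by these elements, $N\trianglelefteq FB_n$.

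Lemma~\ref{deco} then gives
$$\faktor{FB_n}{[FP_n,FP_n]}=\faktor{FB_n}{[P_n,P_n]}\cong \mathbb{Z}^n\rtimes\faktor{B_n}{[P_n,P_n]},$$
with action $\varphi(\alpha[P_n,P_n])(t_i)=\alpha t_i\alpha^{-1}=t_{\pi(\alpha)^{-1}(i)}$. Because $[P_n,P_n]\leq\ker{\pi}$, the map $\pi$ factors through a homomorphism $\overline{\pi}\colon \faktor{B_n}{[P_n,P_n]}\to S_n$. The formula above shows that this action is the permutation action on the indices of the $t_i$ through $\overline{\pi}$, which is the description in the statement.
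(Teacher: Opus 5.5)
Your proposal is correct and follows essentially the same route as the paper: replace $[FP_n,FP_n]$ by $[P_n,P_n]$ via Proposition~\ref{comFP}, obtain normality in $FB_n$ from the characteristic-subgroup fact, and then apply Lemma~\ref{deco}, with the action factoring through $\overline{\pi}$. Your alternative direct check of normality (conjugation by each $t_i$ fixes $P_n$ pointwise, and conjugation by $B_n$ preserves $[P_n,P_n]$) is also valid, though it is not needed.
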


\begin{proof}
Since $FP_n$ is a normal subgroup of $FB_n$ and $[FP_n,FP_n]$ is characteristic in $FP_n$, it follows that $[FP_n,FP_n]\trianglelefteq FB_n$. By Proposition~\ref{comFP}, $[FP_n,FP_n]=[P_n,P_n]$. Moreover, $[P_n,P_n]\leq P_n$, and $P_n$ acts trivially on $\mathbb{Z}^n$. Applying Lemma~\ref{deco} to $FB_n=\mathbb{Z}^n\rtimes B_n$ with $N=[P_n,P_n]$, we obtain
$$
\faktor{FB_n}{[FP_n,FP_n]} = \faktor{FB_n}{[P_n,P_n]} \cong\mathbb{Z}^n\rtimes\faktor{B_n}{[P_n,P_n]}.
$$
The induced action factors through the canonical projection $\faktor{B_n}{[P_n,P_n]}\longrightarrow S_n$, as required.
\end{proof}

\subsection{The crystallographic structure}\label{subsec:crystallographic-structure}

We now describe explicitly the conjugation action of $\faktor{FB_n}{[FP_n,FP_n]}$ on $\faktor{FP_n}{[FP_n,FP_n]}$. Recall that
$$
\faktor{FP_n}{[FP_n,FP_n]}\cong\mathbb{Z}^n\times\faktor{P_n}{[P_n,P_n]}\cong\mathbb{Z}^{n(n+1)/2},
$$
with basis $\{t_1,\ldots,t_n\}\cup\{A_{i,j}\mid 1\leq i<j\leq n\}$. As before, we use the convention $A_{j,i}=A_{i,j}$.

\begin{prop}\label{prop:action}
Let $\alpha\in\faktor{FB_n}{[FP_n,FP_n]}$. By Theorem~\ref{teodeco}, write $\alpha=t_1^{a_1}\cdots t_n^{a_n}\beta$, where $\beta\in\faktor{B_n}{[P_n,P_n]}$, and let $\rho=\psi(\alpha^{-1})=\overline{\pi}(\beta^{-1})\in S_n$. Then 
\begin{align*}
\alpha A_{i,j}\alpha^{-1} & = A_{\rho(i),\rho(j)}\\    
\alpha t_k\alpha^{-1} & = t_{\rho(k)}
\end{align*}
for every $1\leq i<j\leq n$ and $1\leq k\leq n$.
\end{prop}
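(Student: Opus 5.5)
Because the quotient $\faktor{FP_n}{[FP_n,FP_n]}$ is abelian, conjugation by $\alpha$ on it depends only on the image $\psi(\alpha)\in S_n$ (the same argument as the well-definedness remark after Proposition~\ref{caraccrys}). The plan is therefore to reduce to two separate computations: one for the factor $t_1^{a_1}\cdots t_n^{a_n}$ and one for $\beta$.

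First I dispose of the framing part. In $FP_n=\mathbb Z^n\times P_n$ each $t_\ell$ commutes with each $A_{i,j}$ (Proposition~\ref{prop:FP-presentation}), and the $t_\ell$ commute among themselves. Hence $t_1^{a_1}\cdots t_n^{a_n}$ acts trivially by conjugation on both kinds of generators. It follows that $\alpha x\alpha^{-1}=\beta x\beta^{-1}$ for every $x\in\faktor{FP_n}{[FP_n,FP_n]}$. Also $\psi(\alpha)=\overline{\pi}(\beta)$, since $\psi(t_\ell)=1$.

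Next I treat the generators $t_k$. Choose any lift $\tilde\beta\in B_n$ of $\beta$. The conjugation formula recorded earlier, $\tilde\beta t_k\tilde\beta^{-1}=t_{\pi(\tilde\beta)^{-1}(k)}$, gives $\beta t_k\beta^{-1}=t_{\rho(k)}$ with $\rho=\overline{\pi}(\beta)^{-1}=\overline{\pi}(\beta^{-1})=\psi(\alpha^{-1})$. This uses the isomorphism of Theorem~\ref{teodeco}, in which the action on $\mathbb Z^n$ factors through $\overline{\pi}$.

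For the generators $A_{i,j}$, the conjugation action of $\faktor{B_n}{[P_n,P_n]}$ on $\faktor{P_n}{[P_n,P_n]}$ factors through $S_n$. Since $S_n$ is generated by the transpositions $(k,k+1)$ and the right-action convention makes $\rho\mapsto(\text{conjugation})$ compatible with composition, it suffices to check the single-generator case: $\sigma_k A_{i,j}\sigma_k^{-1}\equiv A_{\tau(i),\tau(j)}$ modulo $[P_n,P_n]$, where $\tau=(k,k+1)$ (a transposition is its own inverse). I would either cite this from \cite{GGO1}, where exactly this action is computed, or verify it directly from the standard formulas for $\sigma_k A_{i,j}\sigma_k^{-1}$. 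In the direct verification, the only nontrivial cases are those where $\{i,j\}$ meets $\{k,k+1\}$. There conjugation yields $A_{\tau(i),\tau(j)}$ up to conjugation by pure braids, and such conjugations vanish in the abelianization; when $\{i,j\}=\{k,k+1\}$ the generator $A_{k,k+1}=\sigma_k^2$ is fixed.

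The main obstacle is bookkeeping with the right-action convention. One must confirm that the inductive step over a product $\beta=\beta_1\beta_2$ produces $\rho=\overline{\pi}(\beta)^{-1}$ rather than its inverse or a reversed composite. I would check this against the given formula $\alpha t_i\alpha^{-1}=t_{\pi(\alpha)^{-1}(i)}$, for which both sides transform identically, so the $A_{i,j}$ case inherits the same $\rho$.
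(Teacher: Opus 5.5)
Your proposal is correct and follows essentially the paper's own argument: write $\alpha=z\beta$ with $z\in\mathbb Z^n$, note that $z$ acts trivially by conjugation on $\faktor{FP_n}{[FP_n,FP_n]}\cong\mathbb Z^n\times\faktor{P_n}{[P_n,P_n]}$, read off $\beta t_k\beta^{-1}=t_{\rho(k)}$ from the permutation action on the framing lattice, and take the action on the $A_{i,j}$ from \cite[Proposition~12]{GGO1}. Your optional reduction to the generators $\sigma_k$ and your check of the right-action bookkeeping are also correct; they make explicit what the paper leaves to the citation.
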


\begin{proof}
Write $\alpha=z\beta$, where $z=t_1^{a_1}\cdots t_n^{a_n}\in\mathbb{Z}^n$. Since $\faktor{FP_n}{[FP_n,FP_n]}\cong\mathbb{Z}^n\times\faktor{P_n}{[P_n,P_n]}$, the element $z$ commutes with every $A_{i,j}$. Hence 
$$
\alpha A_{i,j}\alpha^{-1}=z(\beta A_{i,j}\beta^{-1})z^{-1}=\beta A_{i,j}\beta^{-1}.
$$
The conjugation action of $\faktor{B_n}{[P_n,P_n]}$ on $\faktor{P_n}{[P_n,P_n]}$ is given by permutation of the indices; see \cite[Proposition~12]{GGO1}. Therefore,
$$
\beta A_{i,j}\beta^{-1}=A_{\rho(i),\rho(j)},
$$
where $\rho=\overline{\pi}(\beta^{-1})$.

Similarly, the action of $\faktor{B_n}{[P_n,P_n]}$ on the framing subgroup $\mathbb{Z}^n=\langle t_1,\ldots,t_n\rangle$ is induced by the natural permutation action on the indices. Thus
$$
\beta t_k\beta^{-1}=t_{\rho(k)}.
$$
Since $\mathbb{Z}^n$ is abelian, $z$ commutes with $t_{\rho(k)}$, and consequently
$$
\alpha t_k\alpha^{-1}=z(\beta t_k\beta^{-1})z^{-1}=zt_{\rho(k)}z^{-1}=t_{\rho(k)}.
$$
This proves the result.
\end{proof}

The preceding proposition shows that the conjugation action on the translation subgroup is the direct sum of two permutation representations: the natural action of $S_n$ on the framing generators $t_1,\ldots,t_n$ and the action of $S_n$ on the generators $A_{i,j}$ indexed by the two-element subsets of $\{1,\ldots,n\}$. We can now determine the crystallographic structure of the quotient.

\begin{teo}\label{teo:crist}
Let $n\geq3$. There exists a short exact sequence
$$
1\longrightarrow\mathbb{Z}^{n(n+1)/2}\longrightarrow\faktor{FB_n}{[FP_n,FP_n]}\stackrel{\psi}{\longrightarrow}S_n\longrightarrow1.
$$
Moreover, $\faktor{FB_n}{[FP_n,FP_n]}$ is a crystallographic group of dimension $\frac{n(n+1)}{2}$ with holonomy group $S_n$.
\end{teo}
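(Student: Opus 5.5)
The plan is to apply the algebraic characterization in Proposition~\ref{caraccrys}. First I build the short exact sequence. Write $G=\faktor{FB_n}{[FP_n,FP_n]}$. The epimorphism $\psi\colon G\to S_n$ is induced by $\psi_0$, whose kernel is $FP_n$. Since $[FP_n,FP_n]\leq FP_n$, the third isomorphism theorem gives
$$
\ker{\psi}=\faktor{FP_n}{[FP_n,FP_n]}.
$$
By Proposition~\ref{comFP} this equals $\faktor{(\mathbb Z^n\times P_n)}{[P_n,P_n]}\cong\mathbb Z^n\times\faktor{P_n}{[P_n,P_n]}$. This group is free abelian of rank $n+\frac{n(n-1)}{2}=\frac{n(n+1)}{2}$, with basis $\{t_k\}\cup\{A_{i,j}\}$ as recorded before Proposition~\ref{prop:action}. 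This gives the exact sequence in the statement, and its quotient $S_n$ is finite.

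Next I identify the holonomy representation $\Theta\colon S_n\to\operatorname{Aut}(\mathbb Z^{n(n+1)/2})$ induced by conjugation. Proposition~\ref{prop:action} computes it: a lift $\alpha$ of $\rho^{-1}$ sends $t_k\mapsto t_{\rho(k)}$ and $A_{i,j}\mapsto A_{\rho(i),\rho(j)}$. So $\Theta$ is the direct sum of the natural permutation representation on $\{t_1,\ldots,t_n\}$ and the permutation representation on two-element subsets of $\{1,\ldots,n\}$. The inversion $\rho\leftrightarrow\rho^{-1}$ only reflects the right-action convention and does not affect injectivity.

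The remaining step is faithfulness, which is easy here. Suppose $\Theta(\rho)$ is the identity. Then in particular $t_{\rho(k)}=t_k$ for all $k$, because the $t_k$ are distinct basis elements. Hence $\rho=\mathrm{id}$. So the framing summand alone is already faithful, for every $n\geq2$.

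The hypothesis $n\geq3$ matters only if one wants the $A_{i,j}$-summand alone to be faithful, as in \cite{GGO1}. For $n=2$ that summand is trivial, but the framing summand still gives faithfulness. I would note this, and state the result for $n\geq3$ to stay consistent with the unframed case. Proposition~\ref{caraccrys} then shows that $G$ is crystallographic of dimension $\frac{n(n+1)}{2}$ with holonomy group $S_n$.

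The only point needing care is that the basis elements are genuinely distinct and independent in the quotient. This is exactly the direct product decomposition $\faktor{FP_n}{[FP_n,FP_n]}\cong\mathbb Z^n\times\faktor{P_n}{[P_n,P_n]}$ established above, so I do not expect a real obstacle.
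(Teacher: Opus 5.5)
Your proposal is correct and follows essentially the same route as the paper: you build the exact sequence from $\ker\psi=\faktor{FP_n}{[FP_n,FP_n]}\cong\mathbb Z^n\times\faktor{P_n}{[P_n,P_n]}$, prove faithfulness of the holonomy representation from the permutation action on the framing generators $t_1,\ldots,t_n$ alone, and then apply Proposition~\ref{caraccrys}. Your side remark that this faithfulness argument already works for $n\geq2$ is also correct; the paper's proof does not use the hypothesis $n\geq3$ either.
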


\begin{proof}
Since $FP_n=\ker{\psi_0}$ and $[FP_n,FP_n]\trianglelefteq FB_n$, the epimorphism $\psi_0\colon FB_n\longrightarrow S_n$ induces the short exact sequence
$$
1\longrightarrow\faktor{FP_n}{[FP_n,FP_n]}\longrightarrow\faktor{FB_n}{[FP_n,FP_n]}\stackrel{\psi}{\longrightarrow}S_n\longrightarrow1.
$$
By Proposition~\ref{comFP}, $\faktor{FP_n}{[FP_n,FP_n]}\cong\mathbb{Z}^n\times\faktor{P_n}{[P_n,P_n]}$. Since $\faktor{P_n}{[P_n,P_n]}$ is free abelian of rank $\binom{n}{2}$, it follows that 
$$
\faktor{FP_n}{[FP_n,FP_n]}\cong\mathbb{Z}^{n+\binom{n}{2}}=\mathbb{Z}^{n(n+1)/2}.
$$

It remains to prove that the integral representation
$$
\Theta\colon S_n\longrightarrow\operatorname{Aut}\left(\faktor{FP_n}{[FP_n,FP_n]}\right)\cong\operatorname{GL}\left(\frac{n(n+1)}{2},\mathbb{Z}\right),
$$
induced by conjugation, is faithful. By Proposition~\ref{prop:action}, the free abelian subgroup generated by $t_1,\ldots,t_n$ is invariant under the action of $S_n$ induced by $\Theta$, and the resulting action on this subgroup is the natural permutation representation of $S_n$. More precisely, if $\rho\in S_n$, then
$$
\Theta(\rho)(t_k)=t_{\rho^{-1}(k)},\qquad 1\leq k\leq n.
$$
If $\Theta(\rho)$ is the identity, then $t_{\rho^{-1}(k)}=t_k$ for every $k$. Since $t_1,\ldots,t_n$ are distinct elements of a free basis, we obtain $\rho(k)=k$ for every $k$, and hence $\rho=1$. Thus $\Theta$ is faithful.

Therefore, Proposition~\ref{caraccrys} implies that $\faktor{FB_n}{[FP_n,FP_n]}$ is crystallographic of dimension $\frac{n(n+1)}{2}$ and has holonomy group $S_n$.
\end{proof}

\subsection{Torsion and conjugacy in $\faktor{FB_{n}}{[FP_{n},FP_{n}]}$}

We begin with a simple observation concerning finite-order elements in group extensions with torsion-free kernel.

\begin{lema}\label{lem:torsion-order}
Let $1\longrightarrow A \longrightarrow G \stackrel{\phi}{\longrightarrow} Q \longrightarrow 1$ be a short exact sequence of groups, where $A$ is torsion-free. If $g\in G$ has finite order, then $|g|=|\phi(g)|$. In particular, if $Q$ has no elements of finite even order, then neither does $G$.
\end{lema}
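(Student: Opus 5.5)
Let $g\in G$ have finite order $m$, and let $k=|\phi(g)|$. The plan is to show $k$ and $m$ divide each other; the only real input is that $A$ is torsion-free.

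First, since $\phi$ is a homomorphism, $\phi(g)^m=\phi(g^m)=1$. Hence $\phi(g)$ has finite order $k$ and $k\mid m$.

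Next, from $\phi(g^k)=\phi(g)^k=1$ and exactness, $g^k\in\ker\phi=A$. The element $g^k$ has finite order, because $(g^k)^{m}=(g^m)^k=1$. As $A$ is torsion-free, this forces $g^k=1$, so $m\mid k$. Combining the two divisibilities gives $|g|=|\phi(g)|$.

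For the second assertion, suppose $g\in G$ has finite even order. By the first part, $\phi(g)\in Q$ then has the same finite even order, contradicting the hypothesis on $Q$. There is no real obstacle here; the key step is simply noticing that $g^k$ lands in the torsion-free kernel while still being of finite order.
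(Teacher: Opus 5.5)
Your proof is correct and follows the paper's argument essentially verbatim: both show $|\phi(g)|$ divides $|g|$, then use that $g^{|\phi(g)|}$ is a finite-order element of the torsion-free kernel $A$, hence trivial, to get the reverse divisibility. The second assertion then follows immediately, exactly as in the paper.
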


\begin{proof}
Let $g\in G$ have finite order $m$, and let $r=|\phi(g)|$. Since $\phi(g)^m=1$, we have $r\mid m$. Moreover, $\phi(g^r)=1$, so $g^r\in A$. Since $g$ has finite order, so does $g^r$. As $A$ is torsion-free, it follows that $g^r=1$, and hence $m\mid r$. Therefore $m=r$. The last assertion follows immediately.
\end{proof}

We apply this observation to the exact sequence obtained from the semidirect product decomposition in Theorem~\ref{teodeco}.

\begin{teo}\label{par}
Let $n\geq3$. Then $\faktor{FB_n}{[FP_n,FP_n]}$ has no elements of finite even order.
\end{teo}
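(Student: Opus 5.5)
The plan is to apply Lemma~\ref{lem:torsion-order} to the split exact sequence furnished by Theorem~\ref{teodeco}, namely
$$
1\longrightarrow\mathbb{Z}^n\longrightarrow\faktor{FB_n}{[FP_n,FP_n]}\longrightarrow\faktor{B_n}{[P_n,P_n]}\longrightarrow1,
$$
whose kernel $\mathbb{Z}^n=\langle t_1,\ldots,t_n\rangle$ is torsion-free. By the last assertion of Lemma~\ref{lem:torsion-order}, it suffices to know that the quotient $\faktor{B_n}{[P_n,P_n]}$ has no elements of finite even order.

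For that input I would invoke the result of Gonçalves, Guaschi and Ocampo \cite{GGO1}, which states exactly this for $n\geq3$. If a self-contained argument is preferred, I would reprove it as follows. Suppose $\beta\in\faktor{B_n}{[P_n,P_n]}$ has finite even order. Replacing $\beta$ by a suitable odd power, we may assume its order is $2^k$ with $k\geq1$. Then replacing it by $\beta^{2^{k-1}}$ gives an element of order exactly $2$. By Lemma~\ref{lem:torsion-order}, applied to $1\to\faktor{P_n}{[P_n,P_n]}\to\faktor{B_n}{[P_n,P_n]}\to S_n\to1$, its image $\tau$ is an involution in $S_n$. Conjugating, we may assume $\tau$ contains the transposition $(1,2)$. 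Writing $\beta=w\,\sigma_1\sigma_3\cdots$ with $w$ in the abelian kernel and computing $\beta^2$, the coefficient of $A_{1,2}$ would be forced to be odd. Indeed, $(\sigma_1)^2=A_{1,2}$, the fixed basis element $A_{1,2}$ contributes $2\times$(its coefficient in $w$), and the other terms come in $\tau$-orbit pairs not involving $A_{1,2}$. So $\beta^2\neq1$, a contradiction.

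The only genuine obstacle is this second step, the absence of $2$-torsion in the unframed quotient. Citing \cite{GGO1} disposes of it, and the framed statement then follows in one line.
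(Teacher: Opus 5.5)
Your proposal is correct and is exactly the paper's argument: Lemma~\ref{lem:torsion-order} is applied to the split sequence $1\to\mathbb Z^n\to FB_n/[FP_n,FP_n]\to B_n/[P_n,P_n]\to1$ from Theorem~\ref{teodeco}, with the absence of even-order torsion in $B_n/[P_n,P_n]$ taken from \cite[Theorem~2]{GGO1}. Your optional self-contained check of that input is also sound: after conjugating, $\beta$ is an involution lifting $(1,2)(3,4)\cdots$, and the coefficient of $A_{1,2}$ in $\beta^2$ is $2w_{1,2}+1\neq0$.
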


\begin{proof}
By Theorem~\ref{teodeco}, there is a short exact sequence
$$
1\longrightarrow\mathbb Z^n\longrightarrow\faktor{FB_n}{[FP_n,FP_n]}\longrightarrow\faktor{B_n}{[P_n,P_n]}\longrightarrow1.
$$
The group $\mathbb Z^n$ is torsion-free, and $\faktor{B_n}{[P_n,P_n]}$ has no elements of finite even order by \cite[Theorem~2]{GGO1}. The result therefore follows from Lemma~\ref{lem:torsion-order}.
\end{proof}

The splitting of the exact sequence above also allows us to transfer finite-order elements from $\faktor{B_n}{[P_n,P_n]}$ to the framed quotient.

\begin{teo}\label{impar}
Let $n\geq3$ be odd. Then $\faktor{FB_n}{[FP_n,FP_n]}$ contains infinitely many elements of order $n$.
\end{teo}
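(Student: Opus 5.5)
We use the splitting of the sequence $1\to\mathbb Z^n\to\faktor{FB_n}{[FP_n,FP_n]}\to\faktor{B_n}{[P_n,P_n]}\to1$ from Theorem~\ref{teodeco}. The unframed quotient $\faktor{B_n}{[P_n,P_n]}$ contains an element of order $n$ when $n$ is odd; see \cite{GGO1}. Concretely, take the image $\beta$ of $\delta=\sigma_1\sigma_2\cdots\sigma_{n-1}$. In $B_n$ we have $\delta^n=\Delta^2$, and $\Delta^2=\prod_{i<j}A_{i,j}$ in the abelianisation of $P_n$, so $\delta^n$ is not trivial modulo $[P_n,P_n]$. Hence one should instead use the known order-$n$ element of \cite{GGO1}, of the form $\beta=\delta\cdot w$ for a suitable $w\in\faktor{P_n}{[P_n,P_n]}$. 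The point is that $\overline{\pi}(\beta)$ is an $n$-cycle $c$, and $\beta^n$ equals the norm $N_c(w)\cdot\prod_{i<j}A_{i,j}$. Since $n$ is odd, $c$ acts freely on 2-subsets with orbits of size $n$, so one can cancel the product by choosing $w$ appropriately. I will simply invoke \cite{GGO1} for the existence of an element $\beta\in\faktor{B_n}{[P_n,P_n]}$ of order $n$ with $\overline{\pi}(\beta)=c=(1,2,\ldots,n)$.

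Via the semidirect product $\mathbb Z^n\rtimes\faktor{B_n}{[P_n,P_n]}$, $\beta$ is then also an element of order $n$ in the framed quotient. To get infinitely many, consider the elements $z\beta$ for $z=t_1^{a_1}\cdots t_n^{a_n}$. By Proposition~\ref{prop:action}, conjugation by $\beta$ permutes the $t_k$ cyclically. Hence
$$
(z\beta)^n = z\,(\beta z\beta^{-1})\cdots(\beta^{n-1}z\beta^{-(n-1)})\,\beta^n = (t_1t_2\cdots t_n)^{a_1+\cdots+a_n},
$$
since the $\langle c\rangle$-orbit of each $t_k$ is all of $\{t_1,\ldots,t_n\}$ and $\beta^n=1$. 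Thus $(z\beta)^n=1$ exactly when $\sum a_k=0$. There are infinitely many such tuples, for example $z=t_1^{m}t_2^{-m}$ with $m\in\mathbb Z$, and these give pairwise distinct elements $z\beta$ by uniqueness of the normal form. Each $z\beta$ has order dividing $n$. By Lemma~\ref{lem:torsion-order} its order equals $|\psi(z\beta)|=|c|=n$, since the kernel $\faktor{FP_n}{[FP_n,FP_n]}$ of $\psi$ is torsion-free.

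The main obstacle is the first step: securing an order-$n$ lift $\beta$ of the $n$-cycle in $\faktor{B_n}{[P_n,P_n]}$. I would cite \cite[Theorem~2]{GGO1} or the corresponding explicit element there. If it is not directly available, I would verify the norm computation above: $\beta^n$ for $\beta=\delta w$ equals $\delta^n$ times the product of the $\langle c\rangle$-translates of $w$. Because all orbits of $\langle c\rangle$ on $2$-subsets have size $n$, choosing $w$ to be $A_{i,j}^{-1}$ for one representative $\{i,j\}$ of each orbit cancels $\prod_{i<j}A_{i,j}$. The remaining computation with the $t_k$ is routine.
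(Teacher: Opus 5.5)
Your proof is correct, but it gets the infinitude from a different source than the paper does. The paper takes a section $s$ of $\faktor{FB_n}{[FP_n,FP_n]}\to\faktor{B_n}{[P_n,P_n]}$ and transports the infinitely many elements of order $n$ that already exist in $\faktor{B_n}{[P_n,P_n]}$ by \cite[Proposition~21]{GGO1}. The framing lattice plays no role there beyond providing the splitting.

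You need only one order-$n$ lift $\beta$ of an $n$-cycle. Your norm computation $\beta=\delta w$ correctly produces one: take $w$ to be the product of the $A_{i,j}^{-1}$ over a transversal of the free $\langle c\rangle$-orbits on $2$-subsets, so that $N_c(w)$ cancels $\Delta^2=\prod_{i<j}A_{i,j}$. So you do not actually depend on the citation. In this paper the lifting statement is attributed to \cite[Corollary~4]{GGO1}, while \cite[Theorem~2]{GGO1} is quoted for the absence of even torsion.

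You then find infinitely many elements of order $n$ inside the single coset $\mathbb Z^n\beta$, via $(z\beta)^n=(t_1\cdots t_n)^{a_1+\cdots+a_n}$. This is precisely the case of Theorem~\ref{teo:ordem} with exponents supported on the framing generators.

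Your route is self-contained and shows that the extra torsion is produced by the framing itself: the elements $z\beta$ with $z\neq 0$ lie outside the canonical copy of $\faktor{B_n}{[P_n,P_n]}$. By Corollary~\ref{cor:torsion-conjugacy}, however, they are all conjugate to $\beta$. The paper's route is a one-line transfer that works for whatever torsion the unframed quotient supplies, but it says nothing specific to the framed group.
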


\begin{proof}
By Theorem~\ref{teodeco}, $\faktor{FB_n}{[FP_n,FP_n]}\cong\mathbb Z^n\rtimes\faktor{B_n}{[P_n,P_n]}$. Hence, the short exact sequence
$$
1\longrightarrow\mathbb Z^n\longrightarrow\faktor{FB_n}{[FP_n,FP_n]}\stackrel{\xi}{\longrightarrow}\faktor{B_n}{[P_n,P_n]}\longrightarrow1
$$
splits. Let $s\colon\faktor{B_n}{[P_n,P_n]}\longrightarrow\faktor{FB_n}{[FP_n,FP_n]}$ be a section of $\xi$. Then $s$ is injective. Since $n$ is odd, $\faktor{B_n}{[P_n,P_n]}$ contains infinitely many elements of order $n$ by \cite[Proposition~21]{GGO1}. Their images under $s$ are therefore infinitely many elements of order $n$ in $\faktor{FB_n}{[FP_n,FP_n]}$.
\end{proof}

More generally, every element of odd order in $S_n$ admits a lift of the same order to the framed quotient.

\begin{coro}\label{cor:odd-lifting}
Let $n\geq3$, and let $\theta\in S_n$ have odd order $r$. Then there exists $\widetilde{\beta}\in\faktor{FB_n}{[FP_n,FP_n]}$ of order $r$ such that $\psi(\widetilde{\beta})=\theta$. Consequently, if $H\leq S_n$ is cyclic of odd order $r$, then there exists a cyclic subgroup $\widetilde{H}\leq\faktor{FB_n}{[FP_n,FP_n]}$ of order $r$ such that $\psi(\widetilde{H})=H$.
\end{coro}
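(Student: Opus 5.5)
The plan is to reduce to the unframed quotient and then transport along the splitting of Theorem~\ref{teodeco}. First I want an element $\beta\in\faktor{B_n}{[P_n,P_n]}$ of order $r$ with $\overline{\pi}(\beta)=\theta$. This is the one genuinely non-formal step. The cleanest route is to invoke the corresponding lifting result of \cite{GGO1}: there it is shown that $\faktor{B_n}{[P_n,P_n]}$ has finite-order elements exactly of the odd orders occurring in $S_n$, and that odd-order permutations lift with the same order.

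If I must reprove this, I decompose $\theta$ into disjoint cycles, all of odd length $m_1,\dots,m_k$. For a single $m$-cycle on consecutive indices $\{a+1,\dots,a+m\}$, the braid $\sigma_{a+1}\cdots\sigma_{a+m-1}$ restricted to those strands behaves like the element of $\faktor{B_m}{[P_m,P_m]}$. By \cite[Proposition~21]{GGO1}, applied to the parabolic copy of $B_m$, that element (or a suitable translate of it) has order $m$ because $m$ is odd. Braids supported on disjoint blocks of strands commute, so their product has order $\operatorname{lcm}(m_i)=r$ and maps to a permutation conjugate to $\theta$. Conjugating by a lift of the conjugating permutation gives an element of order $r$ mapping exactly to $\theta$. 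The obstacle here is verifying that the parabolic embedding induces a well-defined map on the quotients by $[P_m,P_m]$ preserving order; I would simply cite \cite{GGO1} for this.

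Next, let $s\colon\faktor{B_n}{[P_n,P_n]}\to\faktor{FB_n}{[FP_n,FP_n]}$ be the injective section from the proof of Theorem~\ref{impar}, and set $\widetilde{\beta}=s(\beta)$. Since $s$ is an injective homomorphism, $\widetilde\beta$ has order $r$. Moreover, $\psi\circ s=\overline{\pi}$, because the composite $\mathbb Z^n\rtimes\faktor{B_n}{[P_n,P_n]}\to\faktor{B_n}{[P_n,P_n]}\to S_n$ is $\psi$. Hence $\psi(\widetilde\beta)=\theta$. As an alternative check, Lemma~\ref{lem:torsion-order} confirms that any finite-order lift automatically has order $r$.

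For the consequence, let $H=\langle\theta\rangle$ with $\theta$ of odd order $r$, and put $\widetilde H=\langle\widetilde\beta\rangle$. This subgroup is cyclic of order $r$, and $\psi(\widetilde H)=\langle\psi(\widetilde\beta)\rangle=H$.
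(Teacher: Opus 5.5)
Your proposal is correct and follows essentially the same route as the paper. You obtain $\beta\in\faktor{B_n}{[P_n,P_n]}$ of order $r$ over $\theta$ from the lifting result of \cite{GGO1} (the paper cites Corollary~4 there), push it forward along the injective section $s$ using $\psi\circ s=\overline{\pi}$, and take $\widetilde H=\langle s(\beta)\rangle$; your optional parabolic-subgroup reconstruction of the unframed step is not needed, since the paper simply invokes that citation.
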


\begin{proof}
Let $\overline{\pi}\colon\faktor{B_n}{[P_n,P_n]}\longrightarrow S_n$ be the natural projection. By \cite[Corollary~4]{GGO1}, there exists $\beta\in\faktor{B_n}{[P_n,P_n]}$ such that $|\beta|=r$ and $\overline{\pi}(\beta)=\theta$.

By Theorem~\ref{teodeco}, the exact sequence
$$
1\longrightarrow\mathbb Z^n\longrightarrow\faktor{FB_n}{[FP_n,FP_n]}\stackrel{\xi}{\longrightarrow}\faktor{B_n}{[P_n,P_n]}\longrightarrow1
$$
splits. Let $s\colon\faktor{B_n}{[P_n,P_n]}\longrightarrow\faktor{FB_n}{[FP_n,FP_n]}$ be a section, so that $\xi\circ s=\operatorname{Id}$. Set $\widetilde{\beta}=s(\beta)$. Since $s$ is injective, $|\widetilde{\beta}|=|\beta|=r$. Moreover, $\psi=\overline{\pi}\circ\xi$, and hence
$$
\psi(\widetilde{\beta})=\overline{\pi}(\xi(s(\beta)))=\overline{\pi}(\beta)=\theta.
$$

If $H=\langle\theta\rangle$, then $\widetilde{H}=\langle\widetilde{\beta}\rangle$ has order $r$ and
$$
\psi(\widetilde{H})=\langle\psi(\widetilde{\beta})\rangle=\langle\theta\rangle=H.
$$
\end{proof}

We now apply the general results on finite-order elements and conjugacy classes in free abelian-by-finite groups obtained in \cite[Section~2]{CO}. By Proposition~\ref{prop:action} and Theorem~\ref{teo:crist}, we have a short exact sequence
$$
1\longrightarrow\faktor{FP_n}{[FP_n,FP_n]}\longrightarrow\faktor{FB_n}{[FP_n,FP_n]}\stackrel{\psi}{\longrightarrow}S_n\longrightarrow1,
$$
where $\faktor{FP_n}{[FP_n,FP_n]}\cong\mathbb Z^{n(n+1)/2}$ is the free abelian group with basis
$$
X=\{t_1,\ldots,t_n\}\cup\{A_{i,j}\mid 1\leq i<j\leq n\}.
$$
Moreover, $S_n$ acts faithfully on this free abelian group by permuting the elements of $X$. Thus, the hypotheses of \cite[Theorems~2.1, 2.3 and 2.4]{CO} are satisfied.

For $\alpha\in\faktor{FB_n}{[FP_n,FP_n]}$, set $\theta=\psi(\alpha)^{-1}$. By Proposition~\ref{prop:action}, conjugation by $\alpha$ permutes the elements of $X$ according to $\theta$. We denote by $\mathbb O_\theta(\lambda)$ the orbit of $\lambda\in X$ under $\langle\theta\rangle$, and by $T_\theta\subseteq X$ a set containing exactly one representative of each such orbit.

The following result gives a criterion for finite order inside a fixed coset of $\faktor{FP_n}{[FP_n,FP_n]}$.

\begin{teo}\label{teo:ordem}
Let $\alpha\in\faktor{FB_n}{[FP_n,FP_n]}$ have finite order $t$, and let $\theta=\psi(\alpha)^{-1}$. For $a_i\in\mathbb Z$, the element $\left(\prod_{\lambda_i\in X}\lambda_i^{a_i}\right)\alpha$ has order $t$ if and only if $\sum_{\lambda_r\in O_\theta(\lambda_j)}a_r=0$ for every $\lambda_j\in T_\theta$.
\end{teo}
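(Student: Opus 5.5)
The plan is to compute the $t$-th power of $\gamma=x\alpha$ directly, with $x=\prod_{\lambda_i\in X}\lambda_i^{a_i}\in\faktor{FP_n}{[FP_n,FP_n]}$. This is the specialization of \cite[Theorem~2.1]{CO} to the present setting, and I would either invoke it directly or reproduce its short argument as follows.

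First, the order of $\gamma$ is at least $t$ if it is finite. Indeed, $\psi(\gamma)=\psi(\alpha)$ has order $t$, since $|\alpha|=|\psi(\alpha)|$ by Lemma~\ref{lem:torsion-order}. By the same lemma, $\gamma$ has finite order if and only if $\gamma^t=1$, and in that case its order is exactly $t$. So it suffices to show that $\gamma^t=1$ if and only if the orbit sums vanish.

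Write the translation subgroup additively and let $c$ denote conjugation by $\alpha$. Then
$$
\gamma^t=(x\alpha)^t=x\cdot c(x)\cdot c^2(x)\cdots c^{t-1}(x)\cdot\alpha^t ,
$$
and $\alpha^t=1$. By Proposition~\ref{prop:action}, $c$ permutes the basis $X$ according to $\theta$, so $c^k(\lambda_i)=\theta^k\cdot\lambda_i$. The order of $\theta$ is exactly $t$, since $|\psi(\alpha)|=t$. Hence the norm element $\sum_{k=0}^{t-1}c^k(x)$ has, at each basis element $\lambda$, the coefficient
$$
\sum_{k=0}^{t-1}a_{\theta^{-k}(\lambda)}=\frac{t}{|\mathbb O_\theta(\lambda)|}\sum_{\lambda_r\in\mathbb O_\theta(\lambda)}a_r .
$$
This holds because as $k$ runs over $0,\ldots,t-1$, the element $\theta^{-k}(\lambda)$ runs through the orbit $\mathbb O_\theta(\lambda)$ exactly $t/|\mathbb O_\theta(\lambda)|$ times.

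The coefficient depends only on the orbit, so it is enough to check it at the representatives $\lambda_j\in T_\theta$. Since $X$ is a free basis and $t/|\mathbb O|\neq0$, we get $\gamma^t=1$ if and only if $\sum_{\lambda_r\in\mathbb O_\theta(\lambda_j)}a_r=0$ for every $\lambda_j\in T_\theta$.

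The main obstacle is bookkeeping rather than substance. One must keep track of the right action convention, the fact that $\theta=\psi(\alpha)^{-1}$ rather than $\psi(\alpha)$, and whether $c$ acts by $\theta$ or $\theta^{-1}$. None of these choices affects the orbits of $\langle\theta\rangle$, so the criterion is insensitive to them.
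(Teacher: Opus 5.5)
Your proposal is correct and takes the same route as the paper, which proves the statement in one line by invoking \cite[Theorem~2.1]{CO} with the permutation action of Proposition~\ref{prop:action}. Your unpacked argument is a valid self-contained version of that cited result: you reduce to $\gamma^t=1$ via Lemma~\ref{lem:torsion-order}, then compute the coefficients of $\sum_{k=0}^{t-1}c^k(x)$ orbit by orbit.
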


\begin{proof}
This is a direct application of \cite[Theorem~2.1]{CO}, taking $G=\faktor{FB_n}{[FP_n,FP_n]}$ and $\mathbb ZX=\faktor{FP_n}{[FP_n,FP_n]}$. The required permutation action of $S_n$ on $X$ is precisely the action described in Proposition~\ref{prop:action}. 
\end{proof}

The orbit sums appearing in Theorem~\ref{teo:ordem} also provide a normal form, up to conjugacy, for the elements lying over a fixed permutation.

\begin{teo}\label{teo:conj1}
Let $\alpha\in\faktor{FB_n}{[FP_n,FP_n]}$, let $\theta=\psi(\alpha)^{-1}$, and let $T_\theta$ be a transversal of the $\langle\theta\rangle$-orbits in $X$. For $g=\left(\prod_{\lambda_i\in X}\lambda_i^{a_i}\right)\alpha$, define $S_j=\sum_{\lambda_r\in O_\theta(\lambda_j)}a_r$ for every $\lambda_{j}\in T_\theta$. Then $g$ is conjugate in $\faktor{FB_n}{[FP_n,FP_n]}$ to $\left(\prod_{\lambda_j\in T_\theta}\lambda_j^{S_j}\right)\alpha$. 
\end{teo}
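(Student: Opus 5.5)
This is a direct application of \cite[Theorem~2.3]{CO}, in the same way that Theorem~\ref{teo:ordem} applies \cite[Theorem~2.1]{CO}. The setting is $G=\faktor{FB_n}{[FP_n,FP_n]}$ and $\mathbb ZX=\faktor{FP_n}{[FP_n,FP_n]}$. Proposition~\ref{prop:action} shows that conjugation by $\alpha$ permutes $X$ according to $\theta$. To keep the argument self-contained, I would also sketch the elementary computation behind that theorem, conjugating only by elements of the translation subgroup $\mathbb ZX$.

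Write the translation subgroup additively. For $x\in\mathbb ZX$ put $x^\alpha=\alpha x\alpha^{-1}$. Then for $v\in\mathbb ZX$,
$$
v\,(x\alpha)\,v^{-1}=v\,x\,(\alpha v^{-1}\alpha^{-1})\,\alpha=\bigl(x+v-v^{\alpha}\bigr)\alpha .
$$
So the coefficient vector $x=(a_i)$ may be changed by any element of the image of $1-\theta$, where $\theta$ acts on $\mathbb ZX$ by permuting the basis $X$.

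The image of $1-\theta$ is easy to describe. Since $\theta$ is a permutation of the basis, $\mathbb ZX$ is the direct sum of the permutation modules $\mathbb Z\,\mathbb O_\theta(\lambda_j)$, one for each $\lambda_j\in T_\theta$. On an orbit $\{\lambda,\theta\lambda,\ldots,\theta^{m-1}\lambda\}$, the image of $1-\theta$ is spanned by the differences $\theta^k\lambda-\theta^{k+1}\lambda$. This span is exactly the set of vectors supported on the orbit whose coefficient sum is $0$.

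Given $g=\left(\prod\lambda_i^{a_i}\right)\alpha$, the vector $x-\sum_{j}S_j\lambda_j$ has zero coefficient sum on each orbit. By the description above, it therefore lies in $\operatorname{Im}(1-\theta)$. Hence some $v$ carries $x$ to $\sum_j S_j\lambda_j$, and conjugating $g$ by $v$ gives $\left(\prod_{\lambda_j\in T_\theta}\lambda_j^{S_j}\right)\alpha$.

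The only point needing care is the direction of the action. I must check that conjugation by $\alpha$ acts through the permutation used to define the orbits, $\theta=\psi(\alpha)^{-1}$. The sign convention does not matter, though: $\theta$ and $\theta^{-1}$ generate the same cyclic group and so have the same orbits, and $\operatorname{Im}(1-\theta)=\operatorname{Im}(1-\theta^{-1})$. Hence the normal form is independent of this convention, and the hypotheses of \cite[Theorem~2.3]{CO} hold as already noted.
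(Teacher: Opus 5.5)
Your proposal is correct and takes the same approach as the paper, whose proof consists only of invoking \cite[Theorem~2.3]{CO} with $G=\faktor{FB_n}{[FP_n,FP_n]}$ and $\mathbb ZX=\faktor{FP_n}{[FP_n,FP_n]}$. Your added sketch is a valid self-contained verification of the cited result: conjugating by a translation $v$ shifts the coefficient vector by $(1-\theta)v$, and $\operatorname{Im}(1-\theta)$ consists exactly of the vectors with zero coefficient sum on each $\langle\theta\rangle$-orbit.
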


\begin{proof}
This follows directly from \cite[Theorem~2.3]{CO}, with $G=\faktor{FB_n}{[FP_n,FP_n]}$ and $\mathbb ZX=\faktor{FP_n}{[FP_n,FP_n]}$. 
\end{proof}

In general, conjugacy of the images of two elements in $S_n$ does not imply conjugacy of the elements themselves; see the discussion preceding \cite[Theorem~2.3]{CO}. For finite-order elements, however, the situation is particularly simple.

\begin{coro}\label{cor:torsion-conjugacy}
Let $\alpha_1,\alpha_2\in\faktor{FB_n}{[FP_n,FP_n]}$ be elements of finite order. Then $\alpha_1$ and $\alpha_2$ are conjugate in $\faktor{FB_n}{[FP_n,FP_n]}$ if and only if $\psi(\alpha_1)$ and $\psi(\alpha_2)$ are conjugate in $S_n$.
\end{coro}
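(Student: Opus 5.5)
The forward implication is immediate: if $\alpha_2=g\alpha_1g^{-1}$, then $\psi(\alpha_2)=\psi(g)\psi(\alpha_1)\psi(g)^{-1}$. For the converse, I would work in the setting of \cite[Section~2]{CO}, where the translation subgroup is $\mathbb ZX=\faktor{FP_n}{[FP_n,FP_n]}$ and $S_n$ acts faithfully by permuting the basis $X$ (Proposition~\ref{prop:action}). The argument then splits into two steps: reduce to the case $\psi(\alpha_1)=\psi(\alpha_2)$, and then show that two finite-order elements lying over the same permutation are conjugate.

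\textbf{Reduction.} Suppose $\psi(\alpha_2)=\sigma\psi(\alpha_1)\sigma^{-1}$ with $\sigma\in S_n$. Choose any $h$ with $\psi(h)=\sigma$; this is possible since $\psi$ is surjective. Then $h\alpha_1h^{-1}$ has the same finite order as $\alpha_1$, and $\psi(h\alpha_1h^{-1})=\psi(\alpha_2)$. Replacing $\alpha_1$ by $h\alpha_1h^{-1}$, we may therefore assume $\psi(\alpha_1)=\psi(\alpha_2)$, so that $\alpha_1=\big(\prod_{\lambda_i\in X}\lambda_i^{a_i}\big)\alpha_2$ for some integers $a_i$.

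\textbf{Same permutation.} Set $\theta=\psi(\alpha_2)^{-1}$. By Lemma~\ref{lem:torsion-order}, $\alpha_1$ and $\alpha_2$ have the same finite order $t=|\theta|$. Apply Theorem~\ref{teo:ordem} with $\alpha=\alpha_2$: since $\alpha_1$ has order $t$, all orbit sums vanish, that is, $S_j=\sum_{\lambda_r\in O_\theta(\lambda_j)}a_r=0$ for every $\lambda_j\in T_\theta$. Theorem~\ref{teo:conj1} then says that $\alpha_1$ is conjugate to $\big(\prod_{\lambda_j\in T_\theta}\lambda_j^{S_j}\big)\alpha_2=\alpha_2$. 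This completes the converse.

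\textbf{Possible obstacle.} The main point to check is that Theorem~\ref{teo:conj1} is applied in exactly the form stated: conjugacy to the orbit-sum normal form relative to the fixed base element $\alpha_2$, with no hypothesis on $\alpha_2$ beyond lying over $\theta$. If that form were unavailable, I would verify the claim directly. Vanishing orbit sums mean that $\prod_{\lambda_i\in X}\lambda_i^{a_i}$ lies in the image of $1-\theta$ acting on $\mathbb ZX$, so it can be written as $u\,(\alpha_2u^{-1}\alpha_2^{-1})$ for some $u\in\mathbb ZX$. Then $u\alpha_2u^{-1}=u(\alpha_2u^{-1}\alpha_2^{-1})\alpha_2=\alpha_1$.
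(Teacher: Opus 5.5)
Your proof is correct and follows the paper's argument: reduce to $\psi(\alpha_1)=\psi(\alpha_2)$ by conjugating with a lift, use Lemma~\ref{lem:torsion-order} and Theorem~\ref{teo:ordem} to get vanishing orbit sums, and conclude with Theorem~\ref{teo:conj1}. The paper conjugates $\alpha_2$ rather than $\alpha_1$, which is immaterial; your fallback is also valid and makes the conjugating element $u$ explicit, since vanishing orbit sums put $z$ in the image of $1-\theta$, so that $z=u(\alpha_2u^{-1}\alpha_2^{-1})$.
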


\begin{proof}
If $\alpha_1$ and $\alpha_2$ are conjugate, then their images under $\psi$ are conjugate in $S_n$.

Conversely, suppose that $\psi(\alpha_1)$ and $\psi(\alpha_2)$ are conjugate in $S_n$. Since $\psi$ is surjective, after conjugating $\alpha_2$ in $\faktor{FB_n}{[FP_n,FP_n]}$ if necessary, we may assume that $\psi(\alpha_1)=\psi(\alpha_2)$. Hence there exists 
$$
z=\prod_{\lambda_i\in X}\lambda_i^{a_i}\in\faktor{FP_n}{[FP_n,FP_n]}
$$
such that $\alpha_2=z\alpha_1$. 

Set $\theta=\psi(\alpha_1)^{-1}$. By Lemma~\ref{lem:torsion-order},
$$
|\alpha_1|=|\psi(\alpha_1)|=|\psi(\alpha_2)|=|\alpha_2|.
$$
Since $\alpha_1$ has finite order, Theorem~\ref{teo:ordem}, applied to $\alpha_2=z\alpha_1$, implies that
$$
\sum_{\lambda_r\in\mathbb O_\theta(\lambda_{j})}a_r=0
$$
for every $\lambda_{j}\in T_\theta$. Therefore, Theorem~\ref{teo:conj1} shows that $\alpha_2=z\alpha_1$ is conjugate to
$$
\left(\prod_{\lambda_{j}\in T_\theta}\lambda_{j}^{0}\right)\alpha_1=\alpha_1.
$$
Hence $\alpha_1$ and $\alpha_2$ are conjugate.
\end{proof}

We conclude this subsection with the more general conjugacy criterion arising from \cite[Theorem~2.4]{CO}. Let $C_{S_n}(\theta)$ denote the centralizer of $\theta$ in $S_n$. For $c\in S_n$ and $\lambda_r\in X$, we write $\lambda_{c(r)}$ for the image of $\lambda_r$ under the permutation of $X$ induced by $c$.

\begin{teo}\label{teo:conj2}
Let $\alpha\in\faktor{FB_n}{[FP_n,FP_n]}$, let $\theta=\psi(\alpha)^{-1}$, and let $T_\theta$ be a transversal of the $\langle\theta\rangle$-orbits in $X$. Let $c\in C_{S_n}(\theta)$, and choose $\widetilde{c}\in\faktor{FB_n}{[FP_n,FP_n]}$ such that $\psi(\widetilde{c})^{-1}=c$ and $\widetilde{c}\alpha\widetilde{c}^{-1} = \left(\prod_{\lambda_i\in X}\lambda_i^{c_i}\right)\alpha$. Then there exists $Y\in\ker{\psi}$ such that
$$
Y\widetilde c \left(\prod_{\lambda_r\in T_\theta}\lambda_r^{a_r}\right) \alpha (Y\widetilde c)^{-1} = \left(\prod_{\lambda_r\in T_\theta}\lambda_r^{b_r}\right) \alpha
$$
if and only if, for every $\lambda_r\in T_\theta$, one of the following conditions holds: 
\begin{enumerate}
\item[(i)] if $\lambda_{c^{-1}(r)}\in O_\theta(\lambda_r)$, then $\sum_{\lambda_i\in O_\theta(\lambda_r)}c_i+a_r=b_r$; 

\item[(ii)] if $\lambda_{c^{-1}(r)}\notin O_\theta(\lambda_r)$, let $\lambda_s\in T_\theta$ be the unique representative such that $\lambda_{c^{-1}(r)}\in O_\theta(\lambda_s)$. Then $\sum_{\lambda_i\in O_\theta(\lambda_r)}c_i+a_s=b_r$. 
\end{enumerate}
\end{teo}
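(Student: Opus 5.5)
This is a direct application of \cite[Theorem~2.4]{CO}, in the same way that Theorems~\ref{teo:ordem} and~\ref{teo:conj1} were obtained from \cite[Theorems~2.1 and~2.3]{CO}. We take $G=\faktor{FB_n}{[FP_n,FP_n]}$ and $\mathbb ZX=\faktor{FP_n}{[FP_n,FP_n]}$, with the basis $X=\{t_1,\ldots,t_n\}\cup\{A_{i,j}\}$. By Theorem~\ref{teo:crist} we have the required short exact sequence with finite quotient $S_n$. By Proposition~\ref{prop:action}, $S_n$ acts faithfully on $\mathbb ZX$ by permuting $X$, and conjugation by any lift of $\theta^{-1}$ permutes $X$ according to $\theta$. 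These are exactly the hypotheses already checked before Theorem~\ref{teo:ordem}, so it remains to match notation.

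To make the application transparent, I would also sketch the underlying computation. Let $Y=\prod_{\lambda_i\in X}\lambda_i^{y_i}\in\ker{\psi}$. Conjugating $\bigl(\prod_{\lambda_r\in T_\theta}\lambda_r^{a_r}\bigr)\alpha$ by $\widetilde c$ gives three contributions. The element $\widetilde c$ moves the exponent $a_r$ from $\lambda_r$ to $\lambda_{c^{-1}(r)}$, using Proposition~\ref{prop:action} with $\psi(\widetilde c)^{-1}=c$. The relation $\widetilde c\alpha\widetilde c^{-1}=\bigl(\prod\lambda_i^{c_i}\bigr)\alpha$ contributes the factor $\prod\lambda_i^{c_i}$. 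Conjugating the result by $Y$ then multiplies it by $Y\cdot(\alpha Y^{-1}\alpha^{-1})$, which is $\prod\lambda_i^{y_i-y_{\theta^{-1}(i)}}$ up to the orientation convention.

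Theorem~\ref{teo:ordem} and the argument of Theorem~\ref{teo:conj1} show that an element $z\alpha$ with $z\in\mathbb ZX$ determines, up to conjugation by $\ker{\psi}$, exactly its orbit sums. Moreover, the correction term $Y\cdot\alpha Y^{-1}\alpha^{-1}$ ranges precisely over the elements with all $\langle\theta\rangle$-orbit sums zero. Hence a suitable $Y$ exists if and only if, for each $\lambda_r\in T_\theta$, the orbit sum over $O_\theta(\lambda_r)$ of the exponents of $\widetilde c\bigl(\prod\lambda_r^{a_r}\bigr)\alpha\widetilde c^{-1}$ equals $b_r$.

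That orbit sum is $\sum_{\lambda_i\in O_\theta(\lambda_r)}c_i$ plus those $a_s$ whose transported basis element $\lambda_{c^{-1}(s)}$ lies in $O_\theta(\lambda_r)$. Since $c$ centralizes $\theta$, it permutes the $\langle\theta\rangle$-orbits, so exactly one $s$ contributes. That $s$ is $r$ itself in case~(i), and in case~(ii) it is the representative determined by the orbit correspondence. This yields the two conditions in the statement.

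The main obstacle is purely bookkeeping. One must verify that the right-action convention for $S_n$ and the identity $\psi(\widetilde c)^{-1}=c$ make the indexing $\lambda_{c^{-1}(r)}$ agree with \cite{CO}. In particular, the orbit of $\lambda_{c^{-1}(r)}$ in case~(ii) must correspond to the representative $\lambda_s$ exactly as stated, rather than via $c$ instead of $c^{-1}$. I would check this on a small example, say $n=3$ with $\theta$ a $3$-cycle and $c=\theta$, before citing \cite[Theorem~2.4]{CO} verbatim.
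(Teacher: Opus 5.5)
Your proposal takes the same route as the paper, whose proof also just invokes the proof of \cite[Theorem~2.4]{CO} with $G=\faktor{FB_n}{[FP_n,FP_n]}$ and $\mathbb ZX=\ker{\psi}$, noting that since the normal form is supported on $T_\theta$, the sum of the $a$-coefficients over the orbit of $\lambda_{c^{-1}(r)}$ is just $a_s$. One correction to your sketch settles the bookkeeping you flagged: by Proposition~\ref{prop:action} with $\psi(\widetilde c)^{-1}=c$, conjugation by $\widetilde c$ sends $\lambda_r$ to $\lambda_{c(r)}$, not to $\lambda_{c^{-1}(r)}$, so $a_s$ lands in $O_\theta(\lambda_r)$ exactly when $\lambda_{c(s)}\in O_\theta(\lambda_r)$, i.e.\ (since $c\theta=\theta c$) when $\lambda_s\in O_\theta(\lambda_{c^{-1}(r)})$; this is case~(ii) as stated, whereas your direction would give the orbit of $\lambda_{c(r)}$, which differs whenever $c$ moves $O_\theta(\lambda_r)$ in a cycle of length at least $3$ on the set of orbits.
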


\begin{proof}
This follows from the proof of \cite[Theorem~2.4]{CO}, applied to $G=\faktor{FB_n}{[FP_n,FP_n]}$ and $\mathbb Z X=\faktor{FP_n}{[FP_n,FP_n]}$, using the faithful permutation action described in Proposition~\ref{prop:action}. Indeed, for the fixed element $c\in C_{S_n}(\theta)$ and the chosen lift $\widetilde c$, that proof amounts precisely to determining when there exists $Y\in \mathbb Z X=\ker{\psi}$ such that the displayed conjugacy relation holds.

In case~(ii), the notation in \cite[Theorem~2.4]{CO} involves the sum of the coefficients over the orbit containing $\lambda_{c^{-1}(r)}$. Since the normal form is supported on the transversal $T_\theta$, this sum is precisely $a_s$, where $\lambda_s\in T_\theta$ is the unique representative of that orbit.
\end{proof}

Theorems~\ref{teo:conj1} and~\ref{teo:conj2} provide, respectively, a normal form up to conjugacy for elements lying over a fixed permutation and, for each $c\in C_{S_n}(\theta)$, a criterion for conjugacy by an element of the form $Y\widetilde c$, with $Y\in\ker{\psi}$. Thus, by varying $c$ in $C_{S_n}(\theta)$, one obtains a criterion for deciding when two such normal forms are conjugate. An explicit procedure illustrating the use of these results is given in \cite[p.~577]{CO}.

\subsection{Bieberbach subgroups of $\faktor{FB_n}{[FP_n,FP_n]}$}

The crystallographic structure established above provides a natural way to construct Bieberbach subgroups. We shall show that the preimage of any $2$-subgroup of the holonomy group $S_n$ is torsion-free and hence is a Bieberbach group.

\begin{lema}\label{lem:bieber}
Let $n\geq3$ and let $H\leq S_n$ be a $2$-subgroup. Consider the natural projection $\psi\colon\faktor{FB_n}{[FP_n,FP_n]}\longrightarrow S_n$ and define $\widetilde{H}=\psi^{-1}(H)$. Then $\widetilde{H}$ is a Bieberbach group of dimension $n(n+1)/2$ with holonomy group $H$.
\end{lema}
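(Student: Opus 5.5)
The proof has two parts: first show that $\widetilde{H}$ is crystallographic with the right dimension and holonomy, then show that it is torsion-free.

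\emph{Crystallographic structure.} By Theorem~\ref{teo:crist}, the group $\faktor{FB_n}{[FP_n,FP_n]}$ is crystallographic of dimension $n(n+1)/2$, with holonomy group $S_n$ and translation subgroup $\faktor{FP_n}{[FP_n,FP_n]}=\ker{\psi}$. Corollary~\ref{holocor} applied to $H\leq S_n$ then shows that $\widetilde{H}=\psi^{-1}(H)$ is a crystallographic subgroup of dimension $n(n+1)/2$ with holonomy group $H$. Equivalently, we have the exact sequence
$$
1\longrightarrow\mathbb Z^{n(n+1)/2}\longrightarrow\widetilde{H}\stackrel{\psi|}{\longrightarrow}H\longrightarrow1,
$$
and its holonomy representation is the restriction of the faithful representation $\Theta$, so it is still faithful.

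\emph{Torsion-freeness.} Suppose $g\in\widetilde{H}$ has finite order. Lemma~\ref{lem:torsion-order}, applied to the sequence above with torsion-free kernel, gives $|g|=|\psi(g)|$. Since $\psi(g)\in H$ and $H$ is a $2$-group, $|g|$ is a power of $2$. On the other hand, Theorem~\ref{par} says that $\faktor{FB_n}{[FP_n,FP_n]}$ has no elements of finite even order. Hence $|g|$ is odd, and combined with the previous fact this forces $|g|=1$. So $g=1$, and $\widetilde{H}$ is torsion-free, i.e.\ a Bieberbach group.

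There is no serious obstacle. The only real input is Theorem~\ref{par}, which in turn rests on \cite[Theorem~2]{GGO1}. The one point to check is that Lemma~\ref{lem:torsion-order} is applied to the correct extension, namely the one with kernel $\ker{\psi}$, which is torsion-free.
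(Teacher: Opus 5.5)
Your proposal is correct and follows essentially the same route as the paper: Theorem~\ref{teo:crist} together with Corollary~\ref{holocor} gives the crystallographic structure with holonomy $H$. Torsion-freeness then comes from combining Lemma~\ref{lem:torsion-order}, applied to the extension with torsion-free kernel $\ker{\psi}$, with Theorem~\ref{par}.
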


\begin{proof}
By Theorem~\ref{teo:crist}, the group $\faktor{FB_n}{[FP_n,FP_n]}$ is crystallographic of dimension $n(n+1)/2$ with holonomy group $S_n$. Hence, by Corollary~\ref{holocor}, the subgroup $\widetilde{H}=\psi^{-1}(H)$ is crystallographic of dimension $n(n+1)/2$ with holonomy group $H$.

It remains to prove that $\widetilde{H}$ is torsion-free. Let $h\in\widetilde{H}$ have finite order. Since
$$
\ker{\psi}=\faktor{FP_n}{[FP_n,FP_n]}\cong\mathbb Z^{n(n+1)/2}
$$
is torsion-free, Lemma~\ref{lem:torsion-order} gives $|h|=|\psi(h)|$. Since $\psi(h)\in H$, its order is a power of $2$. On the other hand, Theorem~\ref{par} implies that $h$ cannot have nontrivial finite even order. Therefore $|h|=1$, and hence $h=1$.

Thus $\widetilde{H}$ is torsion-free and consequently is a Bieberbach group of dimension $n(n+1)/2$ with holonomy group $H$.
\end{proof}

Lemma~\ref{lem:bieber} realizes every $2$-subgroup of $S_n$ as the holonomy group of a Bieberbach subgroup of $\faktor{FB_n}{[FP_n,FP_n]}$. Combining this construction with Cayley's theorem yields the following realization result.

\begin{teo}\label{bieberbach}
Let $H$ be a finite $2$-group. For every embedding $H\hookrightarrow S_n$ with $n\geq3$, there exists a compact flat manifold $M$ of dimension $\frac{n(n+1)}{2}$ whose holonomy group is isomorphic to $H$ and whose fundamental group is isomorphic to a subgroup of $\faktor{FB_n}{[FP_n,FP_n]}$.
\end{teo}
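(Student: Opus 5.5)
The plan is to deduce the statement directly from Lemma~\ref{lem:bieber}, together with the correspondence between Bieberbach groups and compact flat Riemannian manifolds recalled in Section~\ref{crys}.

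Fix a finite $2$-group $H$, an integer $n\geq3$, and an embedding $\iota\colon H\hookrightarrow S_n$. Such embeddings always exist: by Cayley's theorem one may take $n=\max\{|H|,3\}$, using the regular representation composed with a standard inclusion $S_{|H|}\leq S_n$ when $|H|<3$. Let $H'=\iota(H)\leq S_n$. It is a $2$-subgroup of $S_n$ isomorphic to $H$.

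Apply Lemma~\ref{lem:bieber} to $H'$. The preimage $\widetilde{H'}=\psi^{-1}(H')\leq\faktor{FB_n}{[FP_n,FP_n]}$ is then a Bieberbach group of dimension $n(n+1)/2$ with holonomy group $H'\cong H$. Since it is discrete, cocompact and torsion-free in $\mathbb R^{n(n+1)/2}\rtimes O(n(n+1)/2,\mathbb R)$, it acts freely and properly discontinuously by isometries. Hence $M=\mathbb R^{n(n+1)/2}/\widetilde{H'}$ is a compact flat Riemannian manifold of dimension $n(n+1)/2$. Its fundamental group is $\pi_1(M)\cong\widetilde{H'}$, a subgroup of $\faktor{FB_n}{[FP_n,FP_n]}$.

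It remains to note that the holonomy group of $M$, meaning the holonomy of its Levi-Civita connection, coincides with the algebraic holonomy group of $\pi_1(M)$. The latter is the quotient by the translation subgroup, which here is $\ker\psi\cap\widetilde{H'}=\ker\psi$. This identification is standard, see \cite{Charlap}, and gives holonomy isomorphic to $H$.

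There is no real obstacle. The only points needing care are that the translation subgroup of $\widetilde{H'}$ is exactly $\ker\psi$ (which the faithfulness argument of Theorem~\ref{teo:crist}, restricted to $H'$, ensures via Corollary~\ref{holocor}), and the small-$n$ adjustment in Cayley's theorem.
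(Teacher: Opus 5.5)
Your proposal is correct and follows essentially the same route as the paper: identify $H$ with its image in $S_n$, apply Lemma~\ref{lem:bieber} to see that $\psi^{-1}(H)$ is a Bieberbach group of dimension $n(n+1)/2$ with holonomy $H$, and take $M$ to be the quotient of $\mathbb R^{n(n+1)/2}$ by it. Your remarks on Cayley's theorem and on identifying the Riemannian holonomy with the algebraic holonomy are harmless additions; the paper leaves the latter implicit and makes the Cayley remark after the theorem, since the statement already assumes an embedding is given.
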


\begin{proof}
Fix an embedding $H\hookrightarrow S_n$, with $n\geq3$, and identify $H$ with its image in $S_n$. Let
$$
\widetilde{H}=\psi^{-1}(H)\leq\faktor{FB_n}{[FP_n,FP_n]}.
$$
By Lemma~\ref{lem:bieber}, $\widetilde{H}$ is a Bieberbach group of dimension $n(n+1)/2$ with holonomy group $H$.

Therefore $\widetilde{H}$ acts freely, properly discontinuously, and cocompactly by isometries on $\mathbb R^{n(n+1)/2}$. Hence $M=\mathbb R^{n(n+1)/2}/\widetilde{H}$ is a compact flat manifold satisfying
$$
\pi_1(M)\cong\widetilde{H}\leq\faktor{FB_n}{[FP_n,FP_n]},
$$
and its holonomy group is isomorphic to $H$.
\end{proof}

By Cayley's theorem, every finite $2$-group admits an embedding into some symmetric group $S_n$ with $n\geq3$. Thus Theorem~\ref{bieberbach} applies to every finite $2$-group.

\section{Flat manifolds associated with the framed quotient}\label{sec:flat}

In this section, we use Bieberbach subgroups of $\faktor{B_n}{[P_n,P_n]}$ to construct Bieberbach subgroups of $\faktor{FB_n}{[FP_n,FP_n]}$. The semidirect product decomposition obtained in Theorem~\ref{teodeco} gives the resulting flat manifolds a natural geometric structure. Geometric fibrations arising from crystallographic quotients of Artin braid groups were studied in~\cite{Ocampo2}, using the theory of complete normal subgroups developed by Ratcliffe and Tschantz. In particular, Bieberbach subgroups of $\faktor{B_n}{[P_n,P_n]}$ were used there to construct torus bundles over compact flat $3$-manifolds. In the framed setting considered here, the additional permutation lattice $\mathbb Z^n$ leads naturally to torus bundles over flat manifolds arising from the corresponding unframed Bieberbach groups. We also study geometric and algebraic properties of these manifolds.

\subsection{Bieberbach subgroups and torus bundles}\label{subsec:torus-bundles}

Let $\Gamma$ be a Bieberbach group of dimension $d$, with translation subgroup $L\cong\mathbb Z^d$ and holonomy group $H\leq S_n$, fitting into a short exact sequence $1\longrightarrow L\longrightarrow\Gamma\xrightarrow{q}H\longrightarrow1$. Suppose that $\Gamma\leq\faktor{B_n}{[P_n,P_n]}$ and that the restriction to $\Gamma$ of the natural projection $\overline{\pi}\colon\faktor{B_n}{[P_n,P_n]}\longrightarrow S_n$ coincides with $q\colon\Gamma\longrightarrow H$, under the inclusion $H\leq S_n$.

If $\rho\colon H\longrightarrow GL(n,\mathbb Z)$ denotes the permutation representation induced by the action on the framing lattice $\mathbb Z^n$, so that $\rho(h)(e_i)=e_{h^{-1}(i)}$ for the standard basis $e_1,\ldots,e_n$, we may consider
$$
\widetilde{\Gamma}=\mathbb Z^n\rtimes_{\rho\circ q}\Gamma\leq\mathbb Z^n\rtimes\faktor{B_n}{[P_n,P_n]}\cong\faktor{FB_n}{[FP_n,FP_n]}.
$$

Let $\Theta\colon H\longrightarrow GL(d,\mathbb Z)$ be the holonomy representation of $\Gamma$ on $L$. Since $\Gamma$ is a Bieberbach group, $\Theta$ is faithful.

\begin{teo}\label{teo:framed-bieberbach}
With the notation above, $\widetilde{\Gamma}$ is a Bieberbach group of dimension $n+d$ with translation subgroup $\widetilde{L}=\mathbb Z^n\times L\cong\mathbb Z^{n+d}$ and holonomy group $H$. Its holonomy representation is equivalent to $\widetilde{\Theta}=\rho\oplus\Theta\colon H\longrightarrow GL(n+d,\mathbb Z)$.
\end{teo}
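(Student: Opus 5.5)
The plan is to verify the algebraic characterization of Proposition~\ref{caraccrys} for $\widetilde{\Gamma}$, and then to prove torsion-freeness separately.

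First, I would set $\widetilde{L}=\mathbb Z^n\times L$, where $\mathbb Z^n\times\{1\}\leq\mathbb Z^n\rtimes\Gamma$ and $L\leq\Gamma$. Since $L=\ker q$ acts trivially on $\mathbb Z^n$ via $\rho\circ q$, the subgroup generated by $\mathbb Z^n$ and $L$ is the direct product $\mathbb Z^n\times L\cong\mathbb Z^{n+d}$. It is the kernel of the composite epimorphism $\widetilde q\colon\widetilde{\Gamma}\to\Gamma\xrightarrow{q}H$, where the first map is the projection of the semidirect product. This gives a short exact sequence $1\to\mathbb Z^{n+d}\to\widetilde{\Gamma}\to H\to1$ with $H$ finite.

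Next, I would compute the conjugation action. For $(z,\gamma)\in\widetilde{\Gamma}$ with $h=q(\gamma)$, conjugation fixes the decomposition $\mathbb Z^n\times L$. On $\mathbb Z^n$ it acts by $\rho(h)$, because $z$ commutes with $\mathbb Z^n$. On $L$ it acts by $\gamma\ell\gamma^{-1}=\Theta(h)(\ell)$, because $z$ conjugates $\ell$ trivially up to the factor $z\,\rho(h)(z)^{-1}$. That factor must be checked: $(z,1)(0,\ell)(z,1)^{-1}=(z-\rho(q(\ell))z,\ell)=(0,\ell)$ since $q(\ell)=1$. So the action is $\rho\oplus\Theta$ in the basis $\{e_i\}\cup(\text{basis of }L)$. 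Faithfulness is immediate because $\Theta$ is faithful; $\rho$ is faithful as well. Proposition~\ref{caraccrys} then gives that $\widetilde{\Gamma}$ is crystallographic of dimension $n+d$ with holonomy $H$ and holonomy representation $\rho\oplus\Theta$. Maximality of the translation subgroup follows from faithfulness of the holonomy representation.

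Finally, for torsion-freeness, suppose $g=(z,\gamma)$ has finite order. Its image $\gamma$ in $\Gamma$ under the projection with kernel $\mathbb Z^n$ has finite order, and $\Gamma$ is Bieberbach, so $\gamma=1$. Then $g=(z,1)\in\mathbb Z^n$, which is torsion-free, so $g=1$. This is essentially Lemma~\ref{lem:torsion-order} applied to $1\to\mathbb Z^n\to\widetilde{\Gamma}\to\Gamma\to1$.

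The only mildly delicate point is the identification of the action on $L$ and the claim that $\widetilde L$ is a direct product. Both reduce to the observation that $L=\ker q$ acts trivially on the framing lattice. I would also remark that $\widetilde{\Gamma}$ does embed in $\faktor{FB_n}{[FP_n,FP_n]}$ via Theorem~\ref{teodeco}, since the action there restricted to $\Gamma$ is $\rho\circ\overline{\pi}|_\Gamma=\rho\circ q$.
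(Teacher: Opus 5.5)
Your proposal is correct and follows the paper's proof essentially step by step. You identify $\widetilde L=\mathbb Z^n\times L$ as the kernel of $\widetilde\Gamma\to H$, compute the conjugation action as $\rho\oplus\Theta$ (faithful because $\Theta$ is), invoke Proposition~\ref{caraccrys}, and deduce torsion-freeness by projecting to the torsion-free group $\Gamma$; your explicit semidirect-product check of the action on $L$ and your remark on maximality of $\widetilde L$ make explicit what the paper leaves implicit.
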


\begin{proof}
Since the action of $\Gamma$ on $\mathbb Z^n$ factors through $H$, the subgroup $L=\ker{q}$ acts trivially on $\mathbb Z^n$. Hence $\mathbb Z^n\rtimes L=\mathbb Z^n\times L$ is a free abelian normal subgroup of $\widetilde{\Gamma}$ of rank $n+d$. Moreover,
$$
\faktor{\widetilde{\Gamma}}{\mathbb Z^n\times L}\cong\faktor{\Gamma}{L}\cong H.
$$
The conjugation action of $H$ on $\mathbb Z^n\times L$ is the direct sum of the permutation representation $\rho$ on $\mathbb Z^n$ and the holonomy representation $\Theta$ on $L$. Thus the associated integral representation is $\widetilde{\Theta}=\rho\oplus\Theta$. Since $\Theta$ is faithful, $\widetilde{\Theta}$ is faithful. Proposition~\ref{caraccrys} therefore implies that $\widetilde{\Gamma}$ is crystallographic of dimension $n+d$ with holonomy group $H$.

It remains to prove that $\widetilde{\Gamma}$ is torsion-free. Let $(z,\gamma)\in\widetilde{\Gamma}$ have finite order. Its image $\gamma$ under the natural projection $\widetilde{\Gamma}\longrightarrow\Gamma$ has finite order. Since $\Gamma$ is torsion-free, $\gamma=1$. Hence $(z,\gamma)=(z,1)\in\mathbb Z^n$, and since $\mathbb Z^n$ is torsion-free, $z=0$. Therefore $\widetilde{\Gamma}$ is torsion-free and hence is a Bieberbach group.
\end{proof}

Fix a realization of $\Gamma$ as a discrete, cocompact, torsion-free subgroup of $\operatorname{Isom}(\mathbb R^d)$. Thus, if $\gamma\in\Gamma$ and $h=q(\gamma)$, we may write $\gamma(y)=a_\gamma+\Theta(h)y$. We realize $\widetilde{\Gamma}$ as a group of isometries of $\mathbb R^n\oplus\mathbb R^d$ by letting $(z,\gamma)\in\widetilde{\Gamma}$ act as
$$
(z,\gamma)(x,y)=\bigl(z+\rho(h)x,a_\gamma+\Theta(h)y\bigr).
$$
Let $M_\Gamma=\mathbb R^d/\Gamma$ and $M_{\widetilde{\Gamma}}=(\mathbb R^n\oplus\mathbb R^d)/\widetilde{\Gamma}$ be the corresponding compact flat manifolds. Let $T^n=\mathbb R^n/\mathbb Z^n$ denote the $n$-dimensional torus. 

\begin{teo}\label{teo:torus-bundle}
There is a geometric fiber bundle $T^n\longrightarrow M_{\widetilde{\Gamma}}\longrightarrow M_\Gamma$ with totally geodesic flat torus fibers. Moreover, this bundle admits an affine section.
\end{teo}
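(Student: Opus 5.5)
The plan is to apply the Ratcliffe--Tschantz machinery recalled in Section~\ref{crys}, taking as normal subgroup the framing lattice
$$
N=\mathbb Z^n\times\{1\}\trianglelefteq\widetilde{\Gamma}.
$$
First I check that $N$ is normal: it is the kernel of the projection $\widetilde{\Gamma}\to\Gamma$, $(z,\gamma)\mapsto\gamma$. By Theorem~\ref{teo:framed-bieberbach}, $\widetilde{\Gamma}$ is a space group acting on $\mathbb R^{n}\oplus\mathbb R^d$ via the explicit formula above. The elements of $N$ act by $(z,1)(x,y)=(z+x,y)$, so they are pure translations by the vectors $(z,0)$. Hence
$$
V=\operatorname{Span}_{\mathbb R}(N)\supseteq\mathbb R^n\oplus 0.
$$
I also need the reverse inclusion, i.e.\ that no other translation in $N$ contributes. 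This holds because every element of $N$ has second coordinate $1$, so all its translation vectors lie in $\mathbb R^n\oplus0$. Thus $V=\mathbb R^n\oplus 0$ and $V^\perp=0\oplus\mathbb R^d$.

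Next, the quotient $\widetilde{\Gamma}/N\cong\Gamma$ is torsion-free because $\Gamma$ is Bieberbach. By \cite[Theorem~13]{RT1}, $N$ is therefore complete and we obtain a geometric fiber bundle
$$
V/N\longrightarrow(\mathbb R^n\oplus\mathbb R^d)/\widetilde{\Gamma}\longrightarrow(\mathbb R^{n+d}/V)/(\widetilde{\Gamma}/N)
$$
with totally geodesic fibers. It remains to identify the fiber and the base:
\begin{itemize}
\item The fiber is $V/N=\mathbb R^n/\mathbb Z^n=T^n$, a flat torus.
\item For the base, I identify $\mathbb R^{n+d}/V$ with $\mathbb R^d$ via $(x,y)\mapsto y$. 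The induced action of $(z,\gamma)N$ is then $y\mapsto a_\gamma+\Theta(h)y$, which is exactly the given action of $\gamma$. So under $\widetilde{\Gamma}/N\cong\Gamma$ the base is $\mathbb R^d/\Gamma=M_\Gamma$, and the bundle map is the one induced by $(x,y)\mapsto y$.
\end{itemize}

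Finally, the extension $1\to N\to\widetilde{\Gamma}\to\Gamma\to1$ splits by construction, since $\widetilde{\Gamma}=\mathbb Z^n\rtimes\Gamma$. Therefore \cite[Lemma~6]{RT1} gives an affine section. For concreteness, the section can also be exhibited directly: the map $y\mapsto(0,y)$ is equivariant with respect to the splitting $\gamma\mapsto(0,\gamma)$, because $(0,\gamma)(0,y)=(0,a_\gamma+\Theta(h)y)$. It therefore descends to an affine section $M_\Gamma\to M_{\widetilde{\Gamma}}$.

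The main obstacle is bookkeeping rather than substance. One must verify that the realization of $\widetilde{\Gamma}$ really lies in $\mathbb R^{n+d}\rtimes O(n+d)$; this holds because $\rho(h)$ is a permutation matrix, hence orthogonal, and $\Theta(h)$ is orthogonal in the fixed realization of $\Gamma$. One must also check that the quotient space $\mathbb R^{n+d}/V$ with its induced $\Gamma$-action is isometric to $\mathbb R^d$ with the original action. Orthogonality of the splitting $\mathbb R^n\oplus\mathbb R^d$ handles the latter.
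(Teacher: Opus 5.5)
Your proposal is correct and follows the paper's proof essentially step by step. You use the same normal subgroup $N=\mathbb Z^n\times\{1\}$, the same appeal to \cite[Theorem~13]{RT1} via torsion-freeness of $\widetilde{\Gamma}/N\cong\Gamma$, the same identification of fiber and base, and \cite[Lemma~6]{RT1} for the affine section; your explicit equivariant section $y\mapsto(0,y)$ is a nice concrete supplement but does not change the argument.
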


\begin{proof}
Let $N=\mathbb Z^n\times\{1\}\trianglelefteq\widetilde{\Gamma}$. Then $\faktor{\widetilde{\Gamma}}{N}\cong\Gamma$, which is torsion-free. By \cite[Theorem~13]{RT1}, $N$ is a complete normal subgroup of $\widetilde{\Gamma}$ and the quotient map induces a geometric fiber bundle
$$
V/N\longrightarrow M_{\widetilde{\Gamma}}\longrightarrow (\mathbb R^{n+d}/V)/\left(\faktor{\widetilde{\Gamma}}{N}\right),
$$
where $V=\operatorname{Span}_{\mathbb R}(N)=\mathbb R^n\oplus\{0\}$. Under the realization fixed above, $\mathbb R^{n+d}/V$ is naturally identified with $\mathbb R^d$, and the induced action of $\faktor{\widetilde{\Gamma}}{N}\cong\Gamma$ is precisely the given action of $\Gamma$ on $\mathbb R^d$. Hence
$$
(\mathbb R^{n+d}/V)/\left(\faktor{\widetilde{\Gamma}}{N}\right)\cong\mathbb R^d/\Gamma=M_\Gamma.
$$
Since $N\cong\mathbb Z^n$, we have $V/N\cong\mathbb R^n/\mathbb Z^n=T^n$. Thus $T^n\longrightarrow M_{\widetilde{\Gamma}}\longrightarrow M_\Gamma$ is a geometric fiber bundle with totally geodesic flat torus fibers.

The short exact sequence $1\longrightarrow\mathbb Z^n\longrightarrow\widetilde{\Gamma}\longrightarrow\Gamma\longrightarrow1$ splits, since $\widetilde{\Gamma}=\mathbb Z^n\rtimes\Gamma$. Therefore, by \cite[Lemma~6]{RT1}, the corresponding geometric fibration admits an affine section.
\end{proof}

Recall that, since $L$ is the translation subgroup of $\Gamma$, the natural projection
$$
T^d=\mathbb R^d/L\longrightarrow M_\Gamma=\mathbb R^d/\Gamma
$$
is a regular covering with deck transformation group $\Gamma/L\cong H$, called the \emph{holonomy covering} of $M_\Gamma$. The previous bundle has a particularly explicit description in terms of the generalized Calabi construction  of Ratcliffe and Tschantz~\cite{RT2}.

\begin{teo}\label{teo:calabi}
With the notation above, there is an isometry of compact flat manifolds
$$
M_{\widetilde{\Gamma}}\cong\faktor{T^n\times T^d}{H},
$$
where $T^d=\mathbb R^d/L$ is the holonomy covering of $M_\Gamma$, and $H$ acts diagonally on $T^n\times T^d$: its action on $T^n$ is induced by the permutation representation $\rho$, while its action on $T^d$ is the deck action induced by the identification $\Gamma/L\cong H$, so that $M_\Gamma\cong T^d/H$. Under this identification, the bundle of Theorem~\ref{teo:torus-bundle} is induced by the projection $T^n\times T^d\longrightarrow T^d$.

In particular, the pullback of this bundle to the holonomy covering $T^d\longrightarrow M_\Gamma$ is the trivial bundle $T^n\times T^d\longrightarrow T^d$.
\end{teo}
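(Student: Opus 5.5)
The plan is to realize the quotient in two stages: first by the normal subgroup $\mathbb Z^n\times L$, which is a lattice of translations, and then by the finite group $H$. Under the realization fixed before Theorem~\ref{teo:torus-bundle}, an element $(z,\ell)\in\mathbb Z^n\times L$ acts on $\mathbb R^n\oplus\mathbb R^d$ by
$$
(x,y)\mapsto(z+x,\;a_\ell+y),
$$
because $q(\ell)=1$ and $\ell$ is a translation of $\mathbb R^d$. Hence $\widetilde L=\mathbb Z^n\times L$ acts as the product lattice, and
$$
(\mathbb R^n\oplus\mathbb R^d)/\widetilde L=T^n\times T^d.
$$
Since $\widetilde L\trianglelefteq\widetilde\Gamma$ with $\widetilde\Gamma/\widetilde L\cong\Gamma/L\cong H$ (Theorem~\ref{teo:framed-bieberbach}), the action of $\widetilde\Gamma$ descends to an isometric action of $H$ on $T^n\times T^d$. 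We then obtain
$$
M_{\widetilde\Gamma}=(\mathbb R^{n+d}/\widetilde L)/(\widetilde\Gamma/\widetilde L)\cong (T^n\times T^d)/H.
$$
This map is an isometry, since both quotients carry the metrics induced from the Euclidean metric on $\mathbb R^{n+d}$.

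Next we identify the $H$-action. For $h\in H$, choose $\gamma\in\Gamma$ with $q(\gamma)=h$ and take the lift $(0,\gamma)\in\widetilde\Gamma$. It acts by
$$
(x,y)\mapsto(\rho(h)x,\;a_\gamma+\Theta(h)y).
$$
On $T^n$ this induces $[x]\mapsto[\rho(h)x]$, which depends only on $h$ and is well defined because $\rho(h)\in GL(n,\mathbb Z)$ preserves $\mathbb Z^n$. On $T^d$ it induces $[y]\mapsto[\gamma y]$, which is exactly the deck action of $\gamma L\in\Gamma/L\cong H$ on $T^d=\mathbb R^d/L$.

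Any other lift of $h$ differs from $(0,\gamma)$ by an element of $\widetilde L$, so the induced map on $T^n\times T^d$ is independent of the choice. Thus the action is diagonal, with the two factors as claimed, and $T^d/H\cong M_\Gamma$.

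For the bundle, the projection $\mathbb R^n\oplus\mathbb R^d\to\mathbb R^d$ is equivariant with respect to $\widetilde\Gamma\to\Gamma$. The quotient $V/N\to M_{\widetilde\Gamma}\to M_\Gamma$ of Theorem~\ref{teo:torus-bundle} was built from this projection. It therefore factors through the $H$-equivariant projection $T^n\times T^d\to T^d$, giving
$$
(T^n\times T^d)/H\longrightarrow T^d/H=M_\Gamma .
$$

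For the last claim, we pull back along the regular covering $T^d\to M_\Gamma$. The $H$-action on $T^d$ is free, since $\Gamma$ is torsion-free and $T^d\to M_\Gamma$ is a covering. Therefore $H$ also acts freely on $T^n\times T^d$, and the square
$$
\begin{array}{ccc}
T^n\times T^d & \longrightarrow & (T^n\times T^d)/H\\
\downarrow & & \downarrow\\
T^d & \longrightarrow & T^d/H
\end{array}
$$
is a pullback. Indeed, the fiber of the left map over $y\in T^d$ is $T^n\times\{y\}$, and it maps bijectively onto the fiber of the right map over $[y]$. This uses freeness of the action on $T^d$: an element $g\in H$ with $g\cdot(x,y)=(x',y)$ must satisfy $g\cdot y=y$, hence $g=1$. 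Consequently the pullback of the bundle is the trivial bundle $T^n\times T^d\to T^d$.

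I expect the only delicate point to be this careful bookkeeping of well-definedness, independence of lifts and the pullback identification. The rest reduces to the explicit formula for the action on $\mathbb R^n\oplus\mathbb R^d$.
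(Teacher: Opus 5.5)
Your proof is correct, but it takes a different route from the paper's. The paper does not quotient by the translation lattice directly. Instead it invokes the generalized Calabi construction of Ratcliffe and Tschantz \cite[Theorem~4]{RT2} for the complete normal subgroup $N=\mathbb Z^n\times\{1\}$, in four steps:
\begin{enumerate}
\item it computes the associated kernel $K=\{b+B\in\widetilde\Gamma\mid b\in V^\perp,\ V\subseteq\operatorname{Fix}(B)\}$;
\item it shows $K=\{0\}\times L$, using that $\rho$ is faithful;
\item it deduces $NK=\widetilde L$ and $\widetilde\Gamma/NK\cong H$;
\item it reads off $M_{\widetilde\Gamma}\cong\bigl((V/N)\times(V^\perp/K)\bigr)/(\widetilde\Gamma/NK)$.
\end{enumerate}
You instead observe that, in the fixed realization, $\widetilde L=\mathbb Z^n\times L$ already acts as an orthogonally split product lattice. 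You then carry out the two-stage quotient $\mathbb R^{n+d}\to T^n\times T^d\to (T^n\times T^d)/H$ by hand and identify the diagonal $H$-action explicitly.

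Your argument has three advantages. It is self-contained and avoids the external theorem. It does not need the faithfulness of $\rho$, only that $\widetilde L$ is the product lattice and $\widetilde\Gamma/\widetilde L\cong\Gamma/L$. It also treats the bundle identification and the pullback more carefully than the paper, which simply asserts the final claim.

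To close the pullback step completely, note that the comparison map
$$T^n\times T^d\to T^d\times_{M_\Gamma}M_{\widetilde\Gamma},\qquad (x,y)\mapsto\bigl(y,[(x,y)]\bigr),$$
is a continuous bijection from a compact space to a Hausdorff space, hence a homeomorphism. Thus your fiberwise bijection really yields a bundle isomorphism.

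The paper's route buys something in exchange. It identifies the decomposition as precisely the Calabi construction attached to the complete normal subgroup $N$ of Theorem~\ref{teo:torus-bundle}, with $K$ and $NK$ as the canonical Ratcliffe--Tschantz data. That is what justifies the terminology ``generalized Calabi construction'' used around the statement.
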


\begin{proof}
Let $V=\mathbb R^n\oplus\{0\}$ and $V^\perp=\{0\}\oplus\mathbb R^d$. As in the proof of Theorem~\ref{teo:torus-bundle}, the subgroup $N=\mathbb Z^n\times\{1\}$ is complete.

Let $K$ be the kernel associated with the complete normal subgroup $N$ in the sense of \cite[Theorem~3]{RT2}. Recall that an isometry $b+B\in\widetilde{\Gamma}$ belongs to $K$ if and only if
$$
b\in V^\perp \qquad\text{and}\qquad V\subseteq\operatorname{Fix}(B).
$$
For $(z,\gamma)\in\widetilde{\Gamma}$, with $h=q(\gamma)$, the translational and linear parts with respect to the decomposition $\mathbb R^n\oplus\mathbb R^d$ are $(z,a_\gamma)$ and $\rho(h)\oplus\Theta(h)$, respectively. Since $V=\mathbb R^n\oplus\{0\}$ and $V^\perp=\{0\}\oplus\mathbb R^d$, the condition $(z,a_\gamma)\in V^\perp$ is equivalent to $z=0$, while
$$
V\subseteq\operatorname{Fix}\bigl(\rho(h)\oplus\Theta(h)\bigr)
$$
is equivalent to $\rho(h)=I_n$. Since $\rho$ is faithful, this is equivalent to $h=1$, or $\gamma\in L$. Therefore $K=\{0\}\times L$.

Consequently, $NK=\mathbb Z^n\times L=\widetilde{L}$ and $\faktor{\widetilde{\Gamma}}{NK}\cong H$. By the generalized Calabi construction \cite[Theorem~4]{RT2},
$$
M_{\widetilde{\Gamma}}\cong\faktor{(V/N)\times(V^\perp/K)}{\left( \faktor{\widetilde{\Gamma}}{NK}\right)}.
$$
Since $V/N\cong T^n$, $V^\perp/K\cong T^d$ and $\faktor{\widetilde{\Gamma}}{NK}\cong H$, we obtain $M_{\widetilde{\Gamma}}\cong\faktor{T^n\times T^d}{H}$. The action on the first factor is induced by $\rho$, while the action on the second factor is the induced affine action for which $M_\Gamma\cong\faktor{T^d}{H}$, and the projection onto the second factor induces the torus bundle of Theorem~\ref{teo:torus-bundle}.

Since the bundle projection is induced by the $H$-equivariant projection $T^n\times T^d\longrightarrow T^d$, its pullback along the holonomy covering $T^d\longrightarrow T^d/H\cong M_\Gamma$ is naturally isomorphic to the product bundle $T^n\times T^d\longrightarrow T^d$.
\end{proof}

\subsection{Orientability, the center and the first Betti number}\label{subsec:invariants}

The decomposition $\widetilde{\Theta}=\rho\oplus\Theta$ allows several invariants of $M_{\widetilde{\Gamma}}$ to be expressed directly in terms of the corresponding invariants of $M_\Gamma$ and the permutation action of $H$ on $\{1,\ldots,n\}$. We denote  the sign homomorphism by $\operatorname{sgn}\colon S_n\longrightarrow\{\pm1\}$, and recall that $A_n=\ker{\operatorname{sgn}}$ is the alternating group.

\begin{prop}\label{prop:orientability}
The flat manifold $M_{\widetilde{\Gamma}}$ is orientable if and only if $\det\Theta(h)=\operatorname{sgn}(h)$ for every $h\in H$.
\end{prop}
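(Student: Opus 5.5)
The plan is to invoke the standard criterion that a compact flat manifold $\mathbb R^m/\Pi$ is orientable if and only if its holonomy representation takes values in $SL(m,\mathbb Z)$. Equivalently, every element of $\Pi$ acts on $\mathbb R^m$ by an orientation-preserving isometry. This holds because the Riemannian covering $\mathbb R^m\to\mathbb R^m/\Pi$ is regular with deck group $\Pi$. Each $a+A\in\Pi$ preserves orientation exactly when $\det A=1$, and $A$ depends only on the image of the element in the holonomy group. Applying this to $\widetilde{\Gamma}$ with the realization fixed before Theorem~\ref{teo:torus-bundle}, the linear part of $(z,\gamma)$ with $h=q(\gamma)$ is $\rho(h)\oplus\Theta(h)$. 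This is consistent with Theorem~\ref{teo:framed-bieberbach}, which identifies the holonomy representation as $\rho\oplus\Theta$ up to equivalence, and equivalent representations have the same determinants. Hence $M_{\widetilde{\Gamma}}$ is orientable if and only if $\det\rho(h)\cdot\det\Theta(h)=1$ for all $h\in H$.

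The next step is the elementary computation $\det\rho(h)=\operatorname{sgn}(h)$. Here $\rho(h)$ is the permutation matrix sending $e_i\mapsto e_{h^{-1}(i)}$. The determinant of a permutation matrix is the sign of the underlying permutation, and $\operatorname{sgn}(h^{-1})=\operatorname{sgn}(h)$, so no subtlety arises from the right-action convention. Because both $\det\Theta(h)$ and $\operatorname{sgn}(h)$ lie in $\{\pm1\}$, the condition $\operatorname{sgn}(h)\det\Theta(h)=1$ is equivalent to $\det\Theta(h)=\operatorname{sgn}(h)$, which is the claimed criterion.

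The only point needing some care is justifying the orientability criterion. I would cite it as standard, for instance from \cite{Charlap} or \cite{Szczepanski}, noting that the realization is by isometries $x\mapsto a+Ax$ with $A$ orthogonal. Alternatively, I can argue directly: an orientation on $\mathbb R^m$ descends to the quotient if and only if every deck transformation preserves it. If some deck transformation reverses it, the quotient is nonorientable, because the quotient is connected and its orientation double cover is then connected. As a cross-check, one can also argue via Theorem~\ref{teo:calabi}: $H$ acts on $T^n\times T^d$ by maps whose linear parts are $\rho(h)\oplus\Theta(h)$, and the action is free, so the quotient is orientable exactly when every such map preserves orientation. I do not expect a genuine obstacle beyond this bookkeeping.
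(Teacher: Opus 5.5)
Your proposal is correct and follows essentially the same route as the paper: use the criterion that a flat manifold is orientable exactly when every holonomy matrix has determinant $1$, factor $\det\widetilde{\Theta}(h)=\det\rho(h)\det\Theta(h)$, and use $\det\rho(h)=\operatorname{sgn}(h)$. The only difference is that the paper simply cites the orientability criterion, whereas you also sketch why it holds via deck transformations; that extra justification is sound.
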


\begin{proof}
A flat manifold is orientable if and only if its holonomy representation has determinant $1$ for every element of the holonomy group; see, for instance, \cite[Remark~5]{Ocampo}. Since $\widetilde{\Theta}(h)=\rho(h)\oplus\Theta(h)$, we have
$$
\det\widetilde{\Theta}(h)=\det\rho(h)\det\Theta(h).
$$
The determinant of the permutation representation is $\det\rho(h)=\operatorname{sgn}(h)$. Therefore $\det\widetilde{\Theta}(h)=1$ for every $h\in H$ if and only if $\det\Theta(h)=\operatorname{sgn}(h)$ for every $h\in H$.
\end{proof}


\begin{coro}\label{cor:orientable-base}
If $M_\Gamma$ is orientable, then $M_{\widetilde{\Gamma}}$ is orientable if and only if $H\leq A_n$.
\end{coro}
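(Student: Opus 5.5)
The plan is to derive this directly from Proposition~\ref{prop:orientability}, combining it with the orientability criterion for $M_\Gamma$ itself. As in that proof, a compact flat manifold is orientable precisely when its holonomy representation takes values in matrices of determinant $1$. So if $M_\Gamma$ is orientable, then $\det\Theta(h)=1$ for every $h\in H$.

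Under this hypothesis, the condition of Proposition~\ref{prop:orientability} becomes the requirement that $\operatorname{sgn}(h)=1$ for every $h\in H$. In other words, it says that $H\leq\ker{\operatorname{sgn}}=A_n$. Both implications then follow at once.
\begin{itemize}
\item If $H\leq A_n$, then $\det\Theta(h)=1=\operatorname{sgn}(h)$ for all $h\in H$, so $M_{\widetilde{\Gamma}}$ is orientable.
\item Conversely, if $M_{\widetilde{\Gamma}}$ is orientable, then $\operatorname{sgn}(h)=\det\Theta(h)=1$ for all $h\in H$, so $H\leq A_n$.
\end{itemize}

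The only point needing care is that $H$ is regarded as a subgroup of $S_n$ via the given inclusion. The sign of $h$ must be computed for this embedding, because the permutation representation $\rho$ is the one attached to it. I do not expect any real obstacle beyond this.
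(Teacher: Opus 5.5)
Your proposal is correct and is exactly the paper's argument. Orientability of $M_\Gamma$ gives $\det\Theta(h)=1$ for all $h\in H$, so the criterion of Proposition~\ref{prop:orientability} reduces to $\operatorname{sgn}(h)=1$ for all $h\in H$, which means $H\leq A_n$.
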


\begin{proof}
Since $M_\Gamma$ is orientable, $\det\Theta(h)=1$ for every $h\in H$. Proposition~\ref{prop:orientability} therefore implies that $M_{\widetilde{\Gamma}}$ is orientable if and only if $\operatorname{sgn}(h)=1$ for every $h\in H$, which is equivalent to $H\leq A_n$.
\end{proof}

We now determine the center and the first Betti number. For a compact manifold $M$, we denote by $b_1(M)$ its first Betti number,
$$
b_1(M)=\operatorname{rank}H_1(M;\mathbb Z).
$$  
Let $O_1,\ldots,O_s$ be the orbits of the action of $H$ on $\{1,\ldots,n\}$, and for each $1\leq j\leq s$ set
$$
t_{O_j}=\prod_{i\in O_j}t_i.
$$

\begin{prop}\label{prop:center-betti}
With the notation above, $Z(\widetilde{\Gamma})\cong Z(\Gamma)\times\mathbb Z^s$. More precisely, $Z(\widetilde{\Gamma})=(\mathbb Z^n)^H\times Z(\Gamma)$, and $(\mathbb Z^n)^H$ is the free abelian group with basis $t_{O_1},\ldots,t_{O_s}$. Consequently, $\operatorname{rank}Z(\widetilde{\Gamma})=\operatorname{rank}Z(\Gamma)+s$ and $b_1(M_{\widetilde{\Gamma}})=b_1(M_\Gamma)+s$. 
\end{prop}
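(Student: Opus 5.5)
We compute the center directly in $\widetilde{\Gamma}=\mathbb Z^n\rtimes_{\rho\circ q}\Gamma$, writing elements as $(z,\gamma)$ with $z\in\mathbb Z^n$ (additively) and product $(z,\gamma)(w,\delta)=(z+\rho(q(\gamma))w,\gamma\delta)$.

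\emph{Step 1: the center.} The element $(z,\gamma)$ is central if and only if it commutes with all $(w,1)$ and all $(0,\delta)$. Commuting with $(w,1)$ for all $w$ gives $\rho(q(\gamma))w=w$ for all $w$. Since $\rho$ is faithful, this forces $q(\gamma)=1$, that is, $\gamma\in L$. Commuting with $(0,\delta)$ gives two conditions, $\gamma\delta=\delta\gamma$ and $z=\rho(q(\delta))z$, for all $\delta\in\Gamma$. Since $q$ is onto $H$, the second says $z\in(\mathbb Z^n)^H$, and the first says $\gamma\in Z(\Gamma)$. Conversely, if $z\in(\mathbb Z^n)^H$ and $\gamma\in Z(\Gamma)$, we must still check $\gamma\in L$. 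This holds because $Z(\Gamma)\le L$ for any Bieberbach group: a central element acts trivially by conjugation on $L$, so its image in $H$ lies in $\ker\Theta=1$ by faithfulness. Then $(z,\gamma)$ commutes with all $(w,1)$. Since $\gamma\in L$ we have $\rho(q(\gamma))=I$, so the product $(z,1)(0,\gamma)$ is direct and
$$
Z(\widetilde{\Gamma})=(\mathbb Z^n)^H\times Z(\Gamma).
$$

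\emph{Step 2: the fixed lattice.} Because $\rho$ permutes the basis, a vector $\sum c_ie_i$ is $H$-invariant if and only if $c_i$ is constant on each $H$-orbit. Hence $(\mathbb Z^n)^H$ is free with basis $t_{O_1},\dots,t_{O_s}$, so $(\mathbb Z^n)^H\cong\mathbb Z^s$ and $\operatorname{rank}Z(\widetilde\Gamma)=\operatorname{rank}Z(\Gamma)+s$.

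\emph{Step 3: the Betti number.} For a Bieberbach group with translation lattice $\Lambda$ and holonomy $H$, we have $b_1=\operatorname{rank}\Lambda^H=\dim(\Lambda\otimes\mathbb Q)^H$. This follows from the transfer: $H_1(\widetilde\Gamma;\mathbb Q)\cong H_1(\Lambda;\mathbb Q)_H$, and the coinvariants and invariants of a finite group on a $\mathbb Q$-module have the same dimension. Since $\widetilde{\Theta}=\rho\oplus\Theta$, the invariants split as $(\mathbb Z^n)^H\oplus L^H$, giving
$$
b_1(M_{\widetilde\Gamma})=s+b_1(M_\Gamma).
$$
This can alternatively be obtained from the standard fact that $Z(\Gamma)=L^H$ and $\operatorname{rank}Z(\Gamma)=b_1(M_\Gamma)$ for Bieberbach groups, combined with Step 2.

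The only point requiring care is the containment $Z(\Gamma)\le L$ and the identification $b_1=\operatorname{rank}$ of the invariant lattice. These are standard, and I would cite them rather than reprove them in full.
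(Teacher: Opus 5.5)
Your proof is correct. For the center, you take a different route from the paper. The paper cites the general identity $Z(\Pi)=\Lambda^F$ for a Bieberbach group $\Pi$ with translation lattice $\Lambda$ and holonomy $F$. It applies this to $\widetilde{\Gamma}$, using $\widetilde{L}=\mathbb Z^n\times L$ and $\widetilde{\Theta}=\rho\oplus\Theta$ from Theorem~\ref{teo:framed-bieberbach}, and reads off $(\mathbb Z^n)^H\times L^H=(\mathbb Z^n)^H\times Z(\Gamma)$. You instead check commutation directly in $\mathbb Z^n\rtimes_{\rho\circ q}\Gamma$ against the generating elements $(w,1)$ and $(0,\delta)$. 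This uses only the faithfulness of $\rho$ and the containment $Z(\Gamma)\le L$, which you correctly derive from the faithfulness of $\Theta$.

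Your version is self-contained. For the center statement it needs neither the identification of the translation subgroup of $\widetilde{\Gamma}$ nor the identity $Z(\Gamma)=L^H$. The paper's version is shorter, and the same identity $Z(\Pi)=\Lambda^F$, combined with $b_1(M_\Pi)=\dim_{\mathbb Q}(\Lambda\otimes\mathbb Q)^F$, immediately gives $b_1=\operatorname{rank}Z$. That is how the paper gets the Betti number formula.

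Your Step 3 is essentially the paper's argument: invariants versus coinvariants over $\mathbb Q$, with the splitting coming from $\rho\oplus\Theta$. If you want to stay within your semidirect-product framework, there is a simpler option. For the split extension, $H_1(\mathbb Z^n\rtimes\Gamma)\cong(\mathbb Z^n)_\Gamma\oplus H_1(\Gamma)$. The coinvariants $(\mathbb Z^n)_\Gamma=(\mathbb Z^n)_H$ of a permutation module are free on the $s$ orbits. This gives $b_1(M_{\widetilde{\Gamma}})=b_1(M_\Gamma)+s$ with no transfer argument, and in fact yields the integral statement $H_1(M_{\widetilde{\Gamma}};\mathbb Z)\cong\mathbb Z^s\oplus H_1(M_\Gamma;\mathbb Z)$.
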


\begin{proof}
For a Bieberbach group $\Pi$ with translation subgroup $\Lambda$ and holonomy group $F$, the center is the fixed subgroup
$$
Z(\Pi)=\Lambda^F=\{\lambda\in\Lambda\mid f\lambda=\lambda\text{ for every }f\in F\};
$$
see \cite[Lemma~5.2(3)]{Szczepanski}. Applying this to $\widetilde{\Gamma}$ and using $\widetilde{L}=\mathbb Z^n\times L$ together with $\widetilde{\Theta}=\rho\oplus\Theta$, we obtain
$$
Z(\widetilde{\Gamma})=\widetilde{L}^{H}=(\mathbb Z^n)^H\times L^H=(\mathbb Z^n)^H\times Z(\Gamma).
$$
Since $H$ acts on the basis $t_1,\ldots,t_n$ by permutation, an element $t_1^{a_1}\cdots t_n^{a_n}$ is fixed by $H$ if and only if the exponents are constant on each $H$-orbit. Hence $(\mathbb Z^n)^H$ is freely generated by $t_{O_1},\ldots,t_{O_s}$, and therefore has rank $s$.

For a compact flat manifold with Bieberbach fundamental group $\Pi$, translation subgroup $\Lambda$ and holonomy group $F$, one has $b_1(M_\Pi)=\dim_{\mathbb Q}(\Lambda\otimes\mathbb Q)^F$; this follows from \cite[Theorems~6.4.12 and~6.4.13]{Dekimpe}, since over $\mathbb Q$ the spaces of invariants and coinvariants of the $F$-module $\Lambda\otimes\mathbb Q$ have the same dimension (see also \cite[p.~118]{Szczepanski}).  Together with $Z(\Pi)=\Lambda^F$, this gives $b_1(M_\Pi) = \operatorname{rank}Z(\Pi)$. Applying this equality to $\Gamma$ and $\widetilde{\Gamma}$ yields
$$
b_1(M_{\widetilde{\Gamma}})=\operatorname{rank}Z(\widetilde{\Gamma})=\operatorname{rank}Z(\Gamma)+s=b_1(M_\Gamma)+s.
$$
\end{proof}

\subsection{Flat manifolds with cyclic holonomy}\label{subsec:odd-cyclic}

We now apply the preceding construction to the Bieberbach groups with finite abelian holonomy constructed in \cite{Ocampo}. We concentrate on the cyclic case.

Let $q=p_1^{r_1}\cdots p_t^{r_t}$ be an odd integer, where $p_1,\ldots,p_t$ are distinct odd primes and $r_i\geq1$ for every $1\leq i\leq t$, and put $q_i=p_i^{r_i}$ and $n=q_1+\cdots+q_t$. Let $\theta\in S_n$ be a product of $t$ disjoint cycles of lengths $q_1,\ldots,q_t$. Since the integers $q_1,\ldots,q_t$ are pairwise coprime, we have
$$
|\theta|=\operatorname{lcm}(q_1,\ldots,q_t)=q,
$$
and hence $\langle\theta\rangle\cong C_q$. From now on, we identify $\langle\theta\rangle$ with $C_q$, so that $\theta$ is regarded as a fixed generator of $C_q$. For a square matrix $A$, let $\chi_A(x)=\det(xI-A)$ denote its characteristic polynomial.

In \cite[Theorem~1]{Ocampo}, a Bieberbach subgroup $\Gamma_q\leq\faktor{B_n}{[P_n,P_n]}$ of dimension $\frac{n(n-1)}{2}$ and holonomy group $C_q$ was constructed, where the holonomy epimorphism is induced by the restriction of the natural projection $\overline{\pi}\colon\faktor{B_n}{[P_n,P_n]}\longrightarrow S_n$. Its holonomy representation 
$$
\psi_q\colon C_q\longrightarrow GL\left(\frac{n(n-1)}{2},\,\mathbb Z\right)
$$ 
satisfies, by \cite[Theorem~11]{Ocampo},
$$
\chi_{\psi_q(\theta)}(x)=\prod_{i=1}^t(x^{q_i}-1)^{(q_i-1)/2}\prod_{1\leq i<j\leq t}(x^{q_iq_j}-1).
$$

Define
$$
\widetilde{\Gamma}_q=\mathbb Z^n\rtimes\Gamma_q\leq\faktor{FB_n}{[FP_n,FP_n]}.
$$
By Theorem~\ref{teo:framed-bieberbach}, $\widetilde{\Gamma}_q$ is a Bieberbach group of dimension $\frac{n(n+1)}{2}$ and holonomy group $C_q$. Let $M_{\widetilde{\Gamma}_q}$ denote the corresponding compact flat manifold. We identify the holonomy representation of $\widetilde{\Gamma}_q$ with
$$
\widetilde{\psi}_q=\rho\oplus\psi_q\colon C_q\longrightarrow GL\left(\frac{n(n+1)}{2},\mathbb Z\right),
$$
where $\rho$ is the permutation representation of $C_q$ on $\mathbb Z^n$. 

\begin{teo}\label{teo:odd-cyclic}
The representation $\widetilde{\psi}_q$ satisfies
$$
\chi_{\widetilde{\psi}_q(\theta)}(x)=\prod_{i=1}^t(x^{q_i}-1)^{(q_i+1)/2}\prod_{1\leq i<j\leq t}(x^{q_iq_j}-1).
$$
Moreover, $b_1(M_{\widetilde{\Gamma}_q})=\operatorname{rank}Z(\widetilde{\Gamma}_q)=\frac{n+t^2}{2}$, and $M_{\widetilde{\Gamma}_q}$ is orientable. 
\end{teo}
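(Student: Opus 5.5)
The proof splits into three computations, all driven by the decomposition $\widetilde{\psi}_q=\rho\oplus\psi_q$ from Theorem~\ref{teo:framed-bieberbach}.

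\emph{Characteristic polynomial.} Since $\widetilde{\psi}_q(\theta)$ is block diagonal, $\chi_{\widetilde{\psi}_q(\theta)}=\chi_{\rho(\theta)}\cdot\chi_{\psi_q(\theta)}$. The matrix $\rho(\theta)$ is the permutation matrix of $\theta$. A permutation matrix of a single $m$-cycle has characteristic polynomial $x^m-1$. Since $\theta$ is a product of disjoint cycles of lengths $q_1,\ldots,q_t$ covering $\{1,\ldots,n\}$, we get $\chi_{\rho(\theta)}(x)=\prod_{i=1}^t(x^{q_i}-1)$. Multiplying by the formula from \cite[Theorem~11]{Ocampo} raises each exponent $(q_i-1)/2$ to $(q_i+1)/2$, which gives the claim.

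\emph{Betti number and center.} I would use the fact, recalled in the proof of Proposition~\ref{prop:center-betti}, that $b_1=\operatorname{rank}Z=\dim_{\mathbb Q}(\Lambda\otimes\mathbb Q)^{C_q}$. This is the multiplicity of the eigenvalue $1$ of $\widetilde{\psi}_q(\theta)$. Because $\widetilde{\psi}_q(\theta)$ has finite order, it is diagonalizable over $\mathbb C$, so this multiplicity equals the multiplicity of the root $x=1$ in $\chi$. Each factor $x^m-1$ has a simple root at $1$. Counting factors gives
$$
\sum_{i=1}^t\frac{q_i+1}{2}+\binom{t}{2}=\frac{n+t}{2}+\frac{t(t-1)}{2}=\frac{n+t^2}{2}.
$$
As a consistency check, Proposition~\ref{prop:center-betti} gives the same value: the number of $\langle\theta\rangle$-orbits on $\{1,\ldots,n\}$ is $s=t$, and the base contributes $b_1(M_{\Gamma_q})=\sum_i(q_i-1)/2+\binom t2$.

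\emph{Orientability.} I would apply Corollary~\ref{cor:orientable-base}, or more directly Proposition~\ref{prop:orientability} in the form $\det\widetilde{\psi}_q(h)=1$. Since $C_q$ has odd order $q$, the determinant is a homomorphism $C_q\to\{\pm1\}$, and every such homomorphism is trivial. Therefore $\det\widetilde{\psi}_q(h)=1$ for all $h$ and $M_{\widetilde{\Gamma}_q}$ is orientable. Equivalently, $\theta$ is a product of odd-length cycles, so it lies in $A_n$. Odd order likewise forces $\det\psi_q=1$, so $M_{\Gamma_q}$ is orientable.

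No step is a serious obstacle. The only point needing care is justifying that the dimension of the invariants equals the multiplicity of the root $1$ of $\chi$; this holds because the holonomy matrix has finite order.
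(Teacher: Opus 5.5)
Your proposal is correct. The characteristic-polynomial step is the same as the paper's. For the remaining two claims you argue differently.

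\textbf{Betti number and center.} The paper does not read anything off $\chi$ here. It applies Proposition~\ref{prop:center-betti} with $s=t$ orbits and takes the base value $b_1(M_{\Gamma_q})=\operatorname{rank}Z(\Gamma_q)=\sum_i\frac{q_i-1}{2}+\frac{t(t-1)}{2}$ from \cite[Theorem~2(b) and Theorem~13]{Ocampo}; you use this only as a consistency check. Your main argument counts the multiplicity of the root $1$ in $\chi_{\widetilde{\psi}_q(\theta)}$. This is valid for two reasons: $C_q=\langle\theta\rangle$ is cyclic, so its invariants are exactly the $1$-eigenspace of $\widetilde{\psi}_q(\theta)$, and finite-order matrices are diagonalizable. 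It makes the second claim a formal consequence of the first. Beyond the identity $b_1=\operatorname{rank}Z=\dim_{\mathbb Q}(\Lambda\otimes\mathbb Q)^F$, it needs only \cite[Theorem~11]{Ocampo}. The paper's route instead illustrates the general principle that each $H$-orbit on $\{1,\ldots,n\}$ adds one to $b_1$ and to $\operatorname{rank}Z$. That principle is what it reuses for the $2$-power family of \cite{OR}.

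\textbf{Orientability.} The paper combines three inputs: $C_q\leq A_n$, the orientability of $M_{\Gamma_q}$ from \cite[Theorem~2(a)]{Ocampo}, and Corollary~\ref{cor:orientable-base}. Your argument is that $\det\circ\widetilde{\psi}_q\colon C_q\to\{\pm1\}$ is trivial because $q$ is odd. It is shorter, needs no citation, and proves more: every flat manifold with odd-order holonomy is orientable, including $M_{\Gamma_q}$ itself. The paper's sign criterion is the one that still says something when the holonomy has even order.
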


\begin{proof}
The permutation representation $\rho$ of $C_q$ on $\mathbb Z^n$ is the direct sum of the permutation representations associated with the $t$ disjoint cycles of lengths $q_1,\ldots,q_t$. Since the characteristic polynomial of the permutation matrix associated with a cycle of length $q_i$ is $x^{q_i}-1$, we have
$$
\chi_{\rho(\theta)}(x)=\prod_{i=1}^t(x^{q_i}-1).
$$ 
Since $\widetilde{\psi}_q=\rho\oplus\psi_q$, we obtain
$$
\chi_{\widetilde{\psi}_q(\theta)}(x)=\chi_{\rho(\theta)}(x)\chi_{\psi_q(\theta)}(x)=\prod_{i=1}^t(x^{q_i}-1)^{(q_i+1)/2}\prod_{1\leq i<j\leq t}(x^{q_iq_j}-1).
$$

The action of $C_q$ on $\{1,\ldots,n\}$ has exactly $t$ orbits. By Proposition~\ref{prop:center-betti}, both the first Betti number and the rank of the center increase by $t$ under the framed construction. By \cite[Theorem~2(b) and Theorem~13]{Ocampo}, $b_1(M_{\Gamma_q}) = \operatorname{rank}Z(\Gamma_q) = \sum_{i=1}^t\frac{q_i-1}{2}+\frac{t(t-1)}{2}$. Therefore 
$$
b_1(M_{\widetilde{\Gamma}_q})=\operatorname{rank}Z(\widetilde{\Gamma}_q)=\sum_{i=1}^t\frac{q_i-1}{2}+\frac{t(t+1)}{2}=\frac{n+t^2}{2}.
$$

Finally, each $q_i$ is odd, so every cycle of $\theta$ has odd length. Thus $\theta$ is an even permutation and consequently $C_q\leq A_n$. The manifolds $M_{\Gamma_q}$ are orientable by \cite[Theorem~2(a)]{Ocampo}, and Corollary~\ref{cor:orientable-base} implies that $M_{\widetilde{\Gamma}_q}$ is orientable.
\end{proof}

The addition of the framing permutation module also has consequences for the existence of flat Kähler structures.

\begin{teo}\label{teo:kahler-odd}
The flat manifold $M_{\widetilde{\Gamma}_q}$ admits a flat K\"ahler structure if and only if $n+t^2\equiv0\pmod4$. Equivalently,
$$
M_{\widetilde{\Gamma}_q}\text{ admits a flat K\"ahler structure}\quad\Longleftrightarrow\quad
\begin{cases}
n\equiv0\pmod4,&\text{if }t\text{ is even},\\
n\equiv3\pmod4,&\text{if }t\text{ is odd}.
\end{cases}
$$
\end{teo}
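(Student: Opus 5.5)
The plan is to use the standard criterion for flat Kähler structures on compact flat manifolds (Johnson–Rees; see also Dekimpe–Hałenda–Szczepański). A compact flat manifold with holonomy group $F$ and real holonomy representation $\Theta_{\mathbb R}$ admits a flat Kähler structure if and only if $\Theta_{\mathbb R}$ is the realification of a complex representation. Equivalently, each real irreducible constituent of \emph{real type} must occur with even multiplicity; constituents of complex type are unrestricted. For a cyclic group $C_q$ of odd order, the only real irreducible representation of real type is the trivial one. Every nontrivial irreducible real representation of $C_q$ is $2$-dimensional, given by rotation by $2\pi k/q$, and is the realification of a complex character. So the criterion reduces to: $M_{\widetilde{\Gamma}_q}$ is Kähler if and only if the multiplicity of the trivial representation in $\widetilde{\psi}_q\otimes\mathbb R$ is even. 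The total dimension $n(n+1)/2$ is then automatically even, since the remaining part is a sum of $2$-dimensional pieces.

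Next I will compute that multiplicity from the characteristic polynomial in Theorem~\ref{teo:odd-cyclic}. Since $\widetilde{\psi}_q(\theta)$ has finite order, it is diagonalizable over $\mathbb C$. The multiplicity of the trivial constituent is therefore the multiplicity of the root $x=1$ in $\chi_{\widetilde{\psi}_q(\theta)}$, which equals $\dim(\mathbb Q^{n(n+1)/2})^{C_q}$. Each factor $x^m-1$ has a simple root at $1$, so this multiplicity is
$$
\sum_{i=1}^t\frac{q_i+1}{2}+\binom{t}{2}=\frac{n+t}{2}+\frac{t(t-1)}{2}=\frac{n+t^2}{2},
$$
in agreement with $b_1(M_{\widetilde{\Gamma}_q})$ from Theorem~\ref{teo:odd-cyclic}. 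It is even exactly when $n+t^2\equiv0\pmod 4$, which gives the first equivalence.

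For the reformulation, $t^2\equiv0\pmod4$ when $t$ is even and $t^2\equiv1\pmod4$ when $t$ is odd. Hence the condition reads $n\equiv0\pmod4$ for $t$ even and $n\equiv3\pmod4$ for $t$ odd.

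The main obstacle is citing the Kähler criterion in exactly the needed form. In particular, I must make sure it requires only that the holonomy representation preserve a complex structure on $\mathbb R^{N}$. Orientability, already established, is consistent with this, but the criterion itself does not use it. I will invoke the Johnson–Rees characterization, noting that for odd $q$ the nontrivial isotypic components already carry $C_q$-invariant complex structures. On the trivial component, a complex structure exists if and only if its dimension is even. This handles both directions: sufficiency by constructing $J$ on each component, and necessity because $J$ must preserve the fixed subspace $(\mathbb R^N)^{C_q}$, forcing that subspace to have even dimension.
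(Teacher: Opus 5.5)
Your proposal is correct and follows the paper's proof: both use the essentially-complex criterion for flat K\"ahler structures, note that for odd cyclic holonomy the trivial representation is the only constituent whose complexification stays irreducible, and reduce to the parity of its multiplicity $(n+t^2)/2$. The only cosmetic difference is that you read this multiplicity off the root $x=1$ of the characteristic polynomial in Theorem~\ref{teo:odd-cyclic}, whereas the paper identifies it with $b_1(M_{\widetilde{\Gamma}_q})$ via Proposition~\ref{prop:center-betti}.
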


\begin{proof}
By \cite[Theorem~1.1 and Proposition~1.2]{DHS}, a flat manifold admits a flat K\"ahler structure if and only if its real holonomy representation is essentially complex. Equivalently, its dimension is even and every irreducible real constituent whose complexification remains irreducible occurs with even multiplicity. Let
$$
\widetilde{\psi}_{q,\mathbb R}\colon C_q\longrightarrow GL\left(\frac{n(n+1)}{2},\mathbb R\right)
$$
denote the real holonomy representation obtained from $\widetilde{\psi}_q$ by extension of scalars.

Since $C_q$ is cyclic of odd order, the only irreducible real representation of $C_q$ whose complexification remains irreducible is the trivial representation. Indeed, every nontrivial irreducible real representation of $C_q$ is two-dimensional and arises from a pair of nonreal complex conjugate characters; its complexification is therefore reducible. 

Consequently, the real holonomy representation $\widetilde{\psi}_{q,\mathbb R}$ is essentially complex if and only if the multiplicity of the trivial representation is even. Notice that this condition also implies that the total dimension is even, since all nontrivial irreducible real constituents are two-dimensional.

The multiplicity of the trivial representation in $\widetilde{\psi}_{q,\mathbb R}$ is the dimension of its fixed-point subspace, which equals $b_1(M_{\widetilde{\Gamma}_q})$ by Proposition~\ref{prop:center-betti}. Hence, by Theorem~\ref{teo:odd-cyclic},
$$
b_1(M_{\widetilde{\Gamma}_q}) = \frac{n+t^2}{2}.
$$
Therefore $M_{\widetilde{\Gamma}_q}$ admits a flat K\"ahler structure if and only if $\frac{n+t^2}{2}\equiv0\pmod2$, or equivalently $n+t^2\equiv0\pmod4$. If $t$ is even, then $t^2\equiv0\pmod4$, and the condition becomes $n\equiv0\pmod4$. If $t$ is odd, then $t^2\equiv1\pmod4$, and the condition becomes $n\equiv3\pmod4$.
\end{proof}

In the particular case in which $q=p^r$ is an odd prime power, we have $t=1$ and $n=q$. Thus the preceding results take a particularly simple form.

\begin{coro}\label{cor:prime-power}
Let $q=p^r$, where $p$ is an odd prime and $r\geq1$. Then $\widetilde{\Gamma}_q$ is a Bieberbach group of dimension $\frac{q(q+1)}{2}$ with holonomy group $C_q$, and $b_1(M_{\widetilde{\Gamma}_q}) = \operatorname{rank}Z(\widetilde{\Gamma}_q)=\frac{q+1}{2}$. Moreover, $M_{\widetilde{\Gamma}_q}$ is orientable and admits a flat Kähler structure if and only if $q\equiv3\pmod 4$. Finally, there is a geometric fiber bundle $T^q\longrightarrow M_{\widetilde{\Gamma}_q}\longrightarrow M_{\Gamma_q}$ with totally geodesic flat torus fibers and an affine section, and
$$
M_{\widetilde{\Gamma}_q}\cong \left(T^q\times T^{q(q-1)/2}\right)/C_q.
$$
\end{coro}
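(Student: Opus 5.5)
The plan is to specialize the results already established for general $q$ to the case $t=1$, $q_1=q=p^r$, $n=q$. First, the dimension count. Theorem~\ref{teo:framed-bieberbach} (or the remark preceding Theorem~\ref{teo:odd-cyclic}) shows that $\widetilde{\Gamma}_q$ is a Bieberbach group of dimension $n+\frac{n(n-1)}{2}=\frac{n(n+1)}{2}$ with holonomy group $C_q$. With $n=q$ this dimension is $\frac{q(q+1)}{2}$. Next, Theorem~\ref{teo:odd-cyclic} with $t=1$ gives
$$
b_1(M_{\widetilde{\Gamma}_q})=\operatorname{rank}Z(\widetilde{\Gamma}_q)=\frac{q+1}{2},
$$
together with orientability. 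As a sanity check, $\theta$ is a single $q$-cycle, so $C_q$ acts transitively on $\{1,\ldots,q\}$ and $s=1$ in Proposition~\ref{prop:center-betti}. By \cite{Ocampo}, $b_1(M_{\Gamma_q})=\frac{q-1}{2}$, and adding $s=1$ recovers $\frac{q+1}{2}$.

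For the Kähler statement, I will apply Theorem~\ref{teo:kahler-odd} with $t=1$. Since $t$ is odd, the criterion reads $n\equiv 3\pmod 4$, which here is $q\equiv3\pmod4$. Equivalently, $\frac{q+1}{2}$ is even exactly when $q\equiv 3\pmod 4$.

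For the geometric part, Theorem~\ref{teo:torus-bundle} applied to $\Gamma=\Gamma_q$ with $n=q$ supplies the geometric bundle $T^q\to M_{\widetilde{\Gamma}_q}\to M_{\Gamma_q}$, with totally geodesic flat fibers and an affine section. Theorem~\ref{teo:calabi} then gives $M_{\widetilde{\Gamma}_q}\cong (T^q\times T^d)/C_q$ with $d=\frac{q(q-1)}{2}$, the dimension of $\Gamma_q$.

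No step is a real obstacle. The only point needing care is confirming that the hypotheses of Section~\ref{subsec:torus-bundles} hold for $\Gamma_q$, namely that its holonomy map is the restriction of $\overline{\pi}$; this is stated in \cite[Theorem~1]{Ocampo}, as recalled above.
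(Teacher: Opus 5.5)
Your proposal is correct and matches the paper, which presents this corollary as the direct specialization $t=1$, $n=q$ of Theorems~\ref{teo:framed-bieberbach}, \ref{teo:odd-cyclic}, \ref{teo:kahler-odd}, \ref{teo:torus-bundle} and~\ref{teo:calabi}. Your arithmetic checks are accurate: $\frac{n+t^2}{2}=\frac{q+1}{2}$, the condition $q+1\equiv 0\pmod 4$, and $d=\frac{q(q-1)}{2}$.
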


We finally note that the same framed construction applies to the Bieberbach groups with cyclic holonomy of $2$-power order studied in \cite{OR}. Let $n\geq3$ and $\ell,m\geq1$, set $D=2^\ell$ and $k=Dm\leq n$, and let $H_{D,k}=\langle\mu\rangle\leq S_n$, where $\mu$ is the product of $m$ disjoint $D$-cycles and fixes the remaining $n-k$ points. Since $\mu$ is a product of disjoint $D$-cycles, it has order $D$, and hence $H_{D,k}\cong C_D$. Let
$$
\Gamma_{D,k}=\overline{\pi}^{-1}(H_{D,k})\leq\faktor{B_n}{[P_n,P_n]}.
$$
By \cite[Lemma~18]{GGO1}, $\Gamma_{D,k}$ is a Bieberbach group of dimension $\frac{n(n-1)}{2}$ with holonomy group $H_{D,k}$. This family, including its holonomy representation and geometric properties,  is studied in detail in \cite{OR}. By Theorem~\ref{teo:framed-bieberbach}, 
$$
\widetilde{\Gamma}_{D,k}=\mathbb Z^n\rtimes\Gamma_{D,k}\leq\faktor{FB_n}{[FP_n,FP_n]}
$$
is a Bieberbach group of dimension $\frac{n(n+1)}{2}$ with holonomy group $H_{D,k}$. Moreover, by Theorem~\ref{teo:torus-bundle}, there is a geometric fiber bundle
$$
T^n\longrightarrow M_{\widetilde{\Gamma}_{D,k}}\longrightarrow M_{\Gamma_{D,k}}
$$
with totally geodesic flat torus fibers and an affine section. The $m$ disjoint $D$-cycles determine $m$ orbits of cardinality $D$, while the remaining $n-k$ fixed points determine $n-k$ singleton orbits. Hence the action of $H_{D,k}$ on $\{1,\ldots,n\}$ has $s=m+n-k$ orbits. Therefore, Proposition~\ref{prop:center-betti} gives
$$
b_1(M_{\widetilde{\Gamma}_{D,k}}) = b_1(M_{\Gamma_{D,k}})+m+n-k 
$$
and
$$
\operatorname{rank}Z(\widetilde{\Gamma}_{D,k}) = \operatorname{rank}Z(\Gamma_{D,k})+m+n-k.
$$

\subsection*{Conflict of Interest Statement}
All authors have no conflicts of interest.

\bibliographystyle{alpha}

\end{document}